\newcommand{\kk}{\mathbb{K}}
\newcommand{\m}{\mathbf{m}}
\newcommand{\D}{\Delta}
\newcommand{\N}{\mathbb{N}}
\newcommand{\cJ}{\mathcal{J}}
\newcommand{\cR}{\mathcal{R}}
\newcommand{\R}{\mathbb{R}}
\newcommand{\Z}{\mathbb{Z}}
\newcommand{\A}{\mathcal{A}}
\newtheorem{thm}{Theorem}[section]
\newtheorem{cor}[thm]{Corollary}
\newtheorem{lem}[thm]{Lemma}
\newtheorem{prop}[thm]{Proposition}
\newtheorem{ques}[thm]{Question}
\theoremstyle{definition}
\newtheorem{defn}[thm]{Definition}
\newtheorem{exm}[thm]{Example}
\theoremstyle{remark}
\newtheorem{remark}[thm]{Remark}
\begin{document}

\title{Generalized Splines and Graphic Arrangements}
\author{Michael DiPasquale}
\address{Michael DiPasquale\\     
	Department of Mathematics\\     
	Oklahoma State University\\     
	Stillwater\\
	OK \ 74078-1058\\     
	USA}     
\email{mdipasq@okstate.edu}
\urladdr{\url{http://math.okstate.edu/people/mdipasq/}}

\subjclass{Primary 13P20, Secondary 13D02, 32S22, 05E40}

\keywords{generalized splines, hyperhomology, graphic arrangements, multi-arrangements, logarithmic derivations, multi-derivations, chordal graphs}

\begin{abstract} 
We define a chain complex for generalized splines on graphs, analogous to that introduced by Billera and refined by Schenck-Stillman for splines on polyhedral complexes.  The hyperhomology of this chain complex yields bounds on the projective dimension of the ring of generalized splines.  We apply this construction to the module of derivations of a graphic multi-arrangement, yielding homological criteria for bounding its projective dimension and determining freeness.  As an application, we show that a graphic arrangement admits a free constant multiplicity iff it splits as a product of braid arrangements.
\end{abstract}

\maketitle

\section{Introduction}
A \textit{spline} is a piecewise polynomial function defined over a subdivision of a region by simplices or convex polytopes.  Study of spline spaces is a fundamental topic in approximation theory and numerical analysis (see~\cite{Boor}) while within the past decade geometric connections have been made between continuous splines and equivariant cohomology rings of toric varieties~\cite{Paynes}.  On the other hand, a \textit{hyperplane arrangement} is a collection of hyperplanes in $\R^n$.  One of the important invariants of a hyperplane arrangement is its module of logarithmic derivations; Terao~\cite{TeraoPoincare} shows that if this module is free then the Poincare polynomial of the arrangement complement completely factors (such arrangements are called \textit{free}).  Closely related is the module of multi-derivations, introduced in~\cite{ZieglerMulti}.  The module of multi-derivations is intimately linked to freeness of arrangements~\cite{YoshCharacterizationFreeArr}, yet even in the case of the braid arrangement this module is only partially understood~\cite{AbeSignedEliminable,TeraoMultiDer}.

In~\cite{HalSplit}, Schenck applies a result of Terao~\cite{TeraoMultiDer} to compute dimension formulas for classical splines on a subdivision of the $n$-simplex (the Alfeld Split). This is done by identifying the module of splines on this subdivision with a module of multi-derivations on the braid arrangement.  In this paper we extend Schenck's identification to subarrangements of the braid arrangement (called \textit{graphic arrangements}).  We use techniques from spline theory to characterize freeness and projective dimension of the module of multi-derivations on a graphic arrangement.

A natural language to use in making this identification is that of \textit{generalized splines}, recently introduced by Gilbert-Polster-Tymoczko~\cite{GSplines}.  Given an edge-labelled graph $(G,\m)$ where each edge $e$ is labeled by an ideal $\m(e)$ of an integral domain $R$, the ring of generalized splines $R_{G,\m}$ on this graph is a subring of the free $R$-module on the vertices consisting of tuples which satisfy congruence relations across each edge.  The main tool developed in this paper is a chain complex $\cR/\cJ[G]$ attached to any edge-labeled graph whose first cohomology module is the ring of generalized splines.  This chain complex is in the spirit of the chain complex introduced by Billera in~\cite{Homology} and later refined by Schenck-Stillman~\cite{LCoho} for classical splines.

The paper is organized as follows.  In \S~\ref{sec:GenSplines} we introduce generalized splines on graphs and define the chain complex $\cR/\cJ[G]$.  In \S~\ref{sec:Hyperhomology} we use hyperhomology to derive bounds on the projective dimension of the ring of generalized splines $R_{G,\m}$ (as an $R$-module) via the projective dimension of the homologies of $\cR/\cJ[G]$.  We introduce hyperplane arrangements in \S~\ref{sec:Preliminaries} and in \S~\ref{sec:DerivationComplex} we show that the module $D(\A_G,\m)$ of multi-derivations on a graphic arrangement $\A_G$ is naturally isomorphic to a ring of generalized splines.  We apply the results of \S~\ref{sec:Hyperhomology} to give the two following criteria:

\vspace{5 pt}

\noindent\textbf{Corollary~\ref{cor:free1}} $D(\A_G,\m)$ is free iff $H^i(\cR/\cJ[G])=0$ for all $i>0$.

\vspace{5 pt}

\noindent\textbf{Corollary~\ref{cor:pdimUB}}  Let $p_i=\mbox{pdim}(H^i(\cR/\cJ[G]))$.  Then 
\[
\mbox{pdim}(D(\A_G,\m))\le \max\limits_{i>0}\{p_i-i-1\},
\]
with equality if $H^i(\cR/\cJ[G])$ is nonzero for only a single $i>0$.

\vspace{5 pt}

In \S~\ref{sec:FreeMult} we use a characterization of chordal graphs due to Herzog-Hibi-Zheng to prove that $D(\A_G,\m)$ is free for a constant multiplicity $\ge 2$ iff the underlying essential arrangement splits as the product of braid arrangements ~\ref{thm:ProductsCompleteGraphs}.  This result bears resemblance to Abe-Terao-Yoshinaga's characterization of totally free arrangements as those which decompose as a product of one and two dimensional arrangements~\cite{TeraoTotallyFree}.  We close in \S~\ref{sec:Examples} with several computations illustrating how the complex $\cR/\cJ[G]$ may be used to study $\mbox{pdim}(D(\A_G))$.  The computations in this final section are inspired by~\cite[Problem~23]{SchenckComputationsConjectures}, where Schenck asks for a formula for $\mbox{pdim}(D(\A_G))$.

\section{Generalized Splines}\label{sec:GenSplines}

Let $R$ be an integral domain, $\mathcal{I}(R)$ the set of ideals of $R$, $G$ a finite simple graph (no loops or multiple edges) with vertices $V(G)$ and edges $E(G)$, and $\m:E(G)\rightarrow\mathcal{I}(R)$ an assignment of an ideal $\m(e)$ to each edge $e\in E(G)$.  We will assume the vertex set $V(G)=\{v_1,\ldots,v_k\}$ is ordered.  If an edge $e$ connects vertices $v_i$ and $v_j$ we denote $e$ by $\{i,j\}$ where $i<j$.  If $F\in\bigoplus_{v\in V(G)} R$, we denote by $F_i$ the component of $F$ corresponding to $v_i$.  In~\cite{GSplines}, Gilbert-Polster-Tymoczko give the following definition.

\begin{defn}\label{defn:GeneralizedSplines}
The ring of \textit{generalized splines} on the labelled graph $(G,\m)$ is defined by
\[
R_{G,\m}:=\{F\in \bigoplus_{v\in V(G)} R | F_i-F_j\in \m(e)\phantom{h} \forall e=\{i,j\}\in E(G)\}.
\]
\end{defn}

\noindent The simplicial coboundary map
\[
\delta^0: \bigoplus\limits_{v_i\in V(G)} Rv_i \rightarrow \bigoplus\limits_{e\in E(G)} Re
\]
from $0$-cochains of the graph $G$ to its $1$-cochains with coefficients in $R$ is defined on basis elements by
\[
\delta^0(v_i)=\sum\limits_{v\in e} c_{v_i,e} e
\]
where
\[
c_{v_i,e=\{j,k\}}=\left\lbrace
\begin{array}{rl}
+1 & \text{if } i=j\\
-1 & \text{if } i=k
\end{array}.
\right.
\]

\begin{lem}\label{lem:FirstCoho}
Let $(G,\m)$ be an edge-labeled graph with ordered vertex set.  The ring $R_{G,\m}$ is the kernel of the map
\[
\bigoplus\limits_{v\in V(G)} R \xrightarrow{\overline{\delta}^0} \bigoplus\limits_{e\in E(G)} R/\m(e),
\]
where $\delta^0$ is the coboundary map from vertices to edges and $\overline{\delta}^0$ is the quotient map.
\end{lem}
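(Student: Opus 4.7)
The plan is to unpack both the definition of $R_{G,\m}$ and the definition of $\overline{\delta}^0$ and match them component by component.

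First I would fix an element $F=(F_1,\dots,F_k)\in\bigoplus_{v\in V(G)} R$ and compute $\delta^0(F)$ using $R$-linearity from the formula given for $\delta^0$ on basis elements. Writing $F=\sum_i F_i v_i$, I get
\[
\delta^0(F)=\sum_i F_i \delta^0(v_i)=\sum_i F_i \sum_{v_i\in e} c_{v_i,e}\, e.
\]
Reorganizing the double sum by first summing over $e\in E(G)$, the coefficient of a fixed edge $e=\{i,j\}$ with $i<j$ is $c_{v_i,e}F_i+c_{v_j,e}F_j = F_i-F_j$ by the sign convention on $c_{v_i,e}$. Hence the $e$-component of $\delta^0(F)$ equals $F_i-F_j$.

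Next I would apply the quotient map componentwise: the $e$-component of $\overline{\delta}^0(F)$ is the class of $F_i-F_j$ in $R/\m(e)$. This class vanishes if and only if $F_i-F_j\in\m(e)$. Therefore $F\in\ker\overline{\delta}^0$ if and only if $F_i-F_j\in\m(e)$ for every edge $e=\{i,j\}\in E(G)$, which is precisely the defining condition of $R_{G,\m}$ in Definition~\ref{defn:GeneralizedSplines}.

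There is essentially no obstacle; the only thing to be careful about is bookkeeping of signs, and ensuring the convention $i<j$ in writing $e=\{i,j\}$ is consistently used so that the coefficient in the coboundary really comes out to $F_i-F_j$ (and not $F_j-F_i$, which would not affect the kernel condition since $\m(e)$ is an ideal, but does affect how cleanly the formula is presented).
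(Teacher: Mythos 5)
Your proposal is correct and follows essentially the same route as the paper's proof, which likewise observes that the $e=\{i,j\}$ component of $\overline{\delta}^0(F)$ is the class of $F_i-F_j$ in $R/\m(e)$ and matches the kernel condition to Definition~\ref{defn:GeneralizedSplines}; you merely spell out the linearity and sign bookkeeping that the paper leaves implicit.
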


\begin{proof}
Let $F\in \bigoplus_{v\in V(G)} R$.  Then $F\in \mbox{ker}(\overline{\delta}^0)\iff F_i-F_j=0\in R/\m(e) \iff F_i-F_j\in \m(e)$ for every edge $e=\{i,j\}\in E(G)$.  The conclusion follows from Definition~\ref{defn:GeneralizedSplines}.
\end{proof}

In the case of classical splines, there is an underlying simplicial or polyhedral complex of which $G$ is the dual graph.  This allows for an extension of the map from Lemma~\ref{lem:FirstCoho} to a larger chain complex build off of the cellular chain complex of the underlying subdivision~\cite{Homology,LCoho}.  In the case of generalized splines the only a priori information which we have to carry out this process comes from the graph itself.

\begin{defn}\label{defn:clique}
Let $G$ be a graph with vertex set $V(G)$ and edge set $E(G)$.  A \textit{clique} of $G$ is a complete subgraph of $G$.  The \textit{clique complex} $\D(G)$ of $G$ is the simplicial complex on the vertex set $V(G)$ whose $i$-simplices are the cliques of $G$ with $(i+1)$ vertices.
\end{defn}

\begin{remark}
We denote by $\D(G)_i$ the set of cliques of $G$ with $(i+1)$ vertices, i.e. the simplices of $\D(G)$ of dimension $i$.
\end{remark}

\begin{defn}\label{defn:TopComplex1}
Let $G$ be a graph with $v$ vertices and $R$ a ring.  Define the chain complex $\cR[G]$ to be the co-chain complex of $\D(G)$ with coefficients in $R$, that is, $\cR[G]_i=\bigoplus\limits_{\gamma\in\D(G)_i} R$ and the differential $\delta^i: \cR[G]_i\rightarrow \cR[G]_{i+1}$ is the simplicial differential of the co-chain complex of $\Delta(G)$ with coefficients in $R$.
\end{defn}

\begin{remark}
Given an $i$-simplex $\sigma=\{j_0,\ldots,j_i\}, \delta^i(\sigma)=\sum_{\sigma\subset\tau} \pm \tau$,
where the sum runs over $\tau\in\D(G)_{i+1}$ such that $\sigma\subset\tau$.  The sign associated to the pair $(\sigma,\tau)$ is determined as follows.  Let $\tau=\{k_0,\ldots,k_{i+1}\}$.  Exactly one of the indices $k_0,\ldots,k_{i+1}$, say $k_t$, is not in $\sigma$.  Then the sign associated to $(\sigma,\tau)$ is $(-1)^t$.
\end{remark}

\begin{remark}
By definition $H^\bullet(\cR[G])\cong H^\bullet(\Delta(G);S)$, the singular cohomology of $\Delta(G)$ with coefficients in $R$.
\end{remark}

\begin{defn}\label{defn:Jideal}
	Let $(G,\m)$ be an edge-labeled graph.  Let $\sigma$ be an $i$-clique of $G$.  Then
	\[
	J(\sigma):=\sum\limits_{e\in E(\sigma)} \m(e).
	\]
	If $\sigma$ is a vertex of $G$, then $J(\sigma)=0$.
\end{defn}

\begin{defn}\label{defn:derivationComplex}
	Given an edge-labeled graph $(G,\m)$, $\cJ[G]$ is the sub-chain complex of $\cR[G]$ with $\cJ[G]_i=\bigoplus\limits_{\gamma\in\D(G)_i} J(\gamma)$.  $\cR/\cJ[G]$ denotes the quotient complex $\cR[G]/\cJ[G]$ with
	$\cR/\cJ[G]_i=\bigoplus\limits_{\gamma\in\D(G)_i} S/J(\gamma)$.
\end{defn}

\begin{remark}
There is a tautological short exact sequence of complexes $0\rightarrow \cJ[G] \rightarrow \cR[G] \rightarrow \cR/\cJ[G] \rightarrow 0$.
\end{remark}

\begin{lem}\label{lem:firstCohomology}
The ring of generalized splines of an edge-labeled graph $(G,\m)$ satisfies $R_{G,\m}\cong H^0(\cR/\cJ[G])$.	
\end{lem}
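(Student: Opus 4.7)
The proof is essentially a matter of unwinding the definitions and recognizing that the map defining $H^0(\cR/\cJ[G])$ is exactly the map $\overline{\delta}^0$ from Lemma~\ref{lem:FirstCoho}. The plan is to identify $\cR/\cJ[G]_0$ and $\cR/\cJ[G]_1$ with the source and target of $\overline{\delta}^0$, respectively, verify that the induced differential coincides with $\overline{\delta}^0$, and then conclude.

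First I would compute $\cR/\cJ[G]_0$. By Definition~\ref{defn:clique}, $\Delta(G)_0$ is the set of $1$-vertex cliques, i.e.\ the vertices of $G$. By Definition~\ref{defn:Jideal}, $J(v)=0$ for every vertex $v$, so $R/J(v)=R$. Therefore
\[
\cR/\cJ[G]_0=\bigoplus_{v\in V(G)} R/J(v)=\bigoplus_{v\in V(G)} R,
\]
which matches the source of $\overline{\delta}^0$ in Lemma~\ref{lem:FirstCoho}. Similarly, $\Delta(G)_1$ is the set of $2$-vertex cliques, i.e.\ the edges of $G$, and for an edge $e=\{i,j\}$ the only edge in the subgraph $\sigma=e$ is $e$ itself, so $J(e)=\m(e)$. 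Hence
\[
\cR/\cJ[G]_1=\bigoplus_{e\in E(G)} R/J(e)=\bigoplus_{e\in E(G)} R/\m(e),
\]
matching the target of $\overline{\delta}^0$.

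Next I would check that the differential $\cR/\cJ[G]_0\to \cR/\cJ[G]_1$ is compatible with $\overline{\delta}^0$. By Definition~\ref{defn:derivationComplex} the differential is induced by the simplicial coboundary on $\cR[G]$, and per the remark following Definition~\ref{defn:TopComplex1}, it sends a $0$-simplex $v_i$ to $\sum_{v_i\in e} c_{v_i,e}\,e$ with the signs $\pm 1$ determined by the position of the missing vertex. For an edge $e=\{j,k\}$ with $j<k$, the missing vertex in $\{j,k\}$ relative to $v_j$ has index $1$ (the second slot), and for $v_k$ has index $0$; this reproduces the sign convention $c_{v_i,e}=+1$ if $i=j$ and $-1$ if $i=k$ given in the excerpt. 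Thus after passing to the quotient, the degree-$0$ differential of $\cR/\cJ[G]$ is exactly the map $\overline{\delta}^0$ of Lemma~\ref{lem:FirstCoho}.

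Finally, since $H^0$ of a cochain complex concentrated in nonnegative degrees is simply the kernel of its first differential, we obtain
\[
H^0(\cR/\cJ[G])=\ker(\overline{\delta}^0)=R_{G,\m},
\]
where the last equality is Lemma~\ref{lem:FirstCoho}. There is no real obstacle here; the only point requiring care is verifying that the sign conventions arising from the simplicial coboundary on $\Delta(G)$ agree with those defining $\delta^0$ in the excerpt, which is a routine matter of checking the $1$-simplex case.
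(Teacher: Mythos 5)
Your proof is correct and is exactly the unwinding the paper intends when it declares the lemma to be ``simply a restatement of Lemma~\ref{lem:FirstCoho}.'' One small caveat on your sign check: with the paper's stated convention the simplicial coboundary sends $v_j$ to $-e$ and $v_k$ to $+e$ for $e=\{j,k\}$ with $j<k$ (the missing vertex for $v_j$ is $k=k_1$, giving $(-1)^1$), which is the \emph{negative} of the convention $c_{v_j,e}=+1$, $c_{v_k,e}=-1$ rather than equal to it --- but since negating components of the target map does not change its kernel, the identification $H^0(\cR/\cJ[G])=\ker(\overline{\delta}^0)=R_{G,\m}$ is unaffected.
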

\begin{proof}
This is simply a restatement of Lemma~\ref{lem:FirstCoho}.
\end{proof}

\section{Hyperhomology and Projective Dimension of Generalize Splines}\label{sec:Hyperhomology}

In~\cite{Spect,Chow,CohVan}, Schenck and Schenck-Stiller use hyperhomology to obtain results on freeness and projective dimension of the module of algebraic splines.  The point is that for any complex $C_\bullet$, there are spectral sequences associated to the horizontal and vertical filtrations of the double complex formed by applying a right (or left) exact functor (in our case $\mbox{Hom}(\underline{\phantom{hh}},R)$) to a Cartan-Eilenberg resolution of $C_\bullet$, both of which converge to the left (or right) hyper-derived functors of $C_\bullet$.  See ~\cite[\S 5.7]{Weibel} for hyperhomology and~\cite{Spect} for the setup in the case of splines.

Suppose we are give a complex $C_\bullet$ with differential $d:C_i\rightarrow C_{i+1}$ increasing the indices.  Recall that a Cartan-Eilenberg resolution of $C_\bullet$ is defined by taking projective resolutions $P_{i\bullet}$ for each module $C_i$, with $P_{ij}$ being the $j^{th}$ projective module in a projective resolution of $C_i$, so that $B_{i\bullet}=H^i(P_{i\bullet})$ (cohomology taken with respect to the horizontal differential $d_h:P_{i,j}\rightarrow P_{i+1,j})$) is a projective resolution for $H^i(C_\bullet)$.  Let $\hat{P}_{\bullet\bullet}$ be the double complex with $\hat{P}_{ij}=\mbox{Hom}(P_{ij},R)$.

The terms on the $E_2$ page of the spectral sequence corresponding to the horizontal filtration (compute homology with respect to horizontal differential first) of $\hat{P}_{\bullet\bullet}$ are
\[
\phantom{i}_h E^2_{ij}=Ext^j_R(H^i(C_\bullet),R)
\]
while the terms on the $E_2$ page of the spectral sequence corresponding to the vertical filtration (compute homology with respect to vertical differential first) of $\hat{P}_{\bullet\bullet}$ are
\[
\phantom{i}_v E^2_{ij}=H^i(Ext^j_R(C_i,R)),
\]
where $H^i$ is computed with respect to the horizontal differential.  The differentials for the horizontal filtration on the $E^r$ page are of the form $\phantom{i}_h d^r_{ij}: \phantom{i}_h E^r_{ij}\rightarrow E^r_{i+r-1,j+r}$.  The differentials for the vertical filtration on the $E^r$ page are of the form $\phantom{i}_v d^r_{ij}: \phantom{i}_v E^r_{ij}\rightarrow E^r_{i-r,j-r+1}$.  With our grading conventions, the total complex $\hat{P}_{\bullet\bullet}$ has graded pieces $\bigoplus_{i-j=k} \hat{P}_{ij}$ labelled by the difference of $i$ and $j$.  The following result is essentially a translation of~\cite[Lemma~4.11]{Spect} into our context.

\begin{thm}\label{thm:pdimUB}
Let $(G,\m)$ be an edge-labelled graph, $\cR/\cJ[G]$ the associated chain comple, and suppose furthermore that $R/J(\sigma)$ is Cohen-Macaulay of codimension $i$ for each face $\sigma\in\D(G)_i$.  For $i>0$, set $p_i=\mbox{pdim}(H^i(\cR/\cJ[G]))$. Then $\mbox{pdim}(R_{G,\m})\le \max\limits_{i>0}\{p_i-i-1\}$, with equality if there is only a single nonvanishing $H^i(\cR/\cJ[G])$ for $i>0$.
\end{thm}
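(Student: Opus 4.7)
The plan is to apply the hyperhomology machinery developed above to the complex $C_\bullet:=\cR/\cJ[G]$, whose $H^0$ is $R_{G,\m}$ by Lemma~\ref{lem:firstCohomology}. Take a Cartan-Eilenberg resolution, apply $\mbox{Hom}(-,R)$, and compare the two spectral sequences $\phantom{i}_hE$ and $\phantom{i}_vE$ that both compute the hyper-derived modules $\mathbb{H}_\bullet$ of $C_\bullet$. The strategy is to pin $\mathbb{H}_\bullet$ down using the vertical filtration and then read off the projective dimension bound from the horizontal filtration.

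The Cohen-Macaulay hypothesis forces the vertical spectral sequence to degenerate drastically. Since each summand $R/J(\sigma)$ of $C_i$ is Cohen-Macaulay of codimension $i$, one has $\mbox{Ext}^j_R(R/J(\sigma),R)=0$ for $j\neq i$, and hence $\mbox{Ext}^j_R(C_i,R)=0$ whenever $j\neq i$. Thus $\phantom{i}_vE_2^{i,j}=H^i(\mbox{Ext}^j_R(C_\bullet,R))$ is concentrated on the diagonal $j=i$, where it equals $\mbox{Ext}^i_R(C_i,R)$, and every higher differential $\phantom{i}_vd^r$ leaves the diagonal and lands in zero. Since each diagonal entry lives in total degree $i-j=0$, this forces $\mathbb{H}_k=0$ for $k\neq 0$.

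The upper bound now falls out of the horizontal spectral sequence $\phantom{i}_hE_2^{i,j}=\mbox{Ext}^j_R(H^i(C_\bullet),R)$. The entry $\phantom{i}_hE_2^{0,j}=\mbox{Ext}^j_R(R_{G,\m},R)$ lives in total degree $-j$, so for $j\geq 1$ it must be killed before reaching $E_\infty$. No differential enters column $i=0$, since the formula $\phantom{i}_hd^r\colon(i',j')\to(i'+r-1,j'+r)$ forces such a differential to originate in column $i'=1-r<0$, so the only hope is an outgoing $\phantom{i}_hd^r\colon(0,j)\to(r-1,j+r)$ for some $r\geq 2$. Its target is a subquotient of $\phantom{i}_hE_2^{r-1,j+r}=\mbox{Ext}^{j+r}_R(H^{r-1}(C_\bullet),R)$, which vanishes whenever $j+r>p_{r-1}$, i.e.\ whenever $j>p_i-i-1$ with $i:=r-1\geq 1$. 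Hence, for $j>\max\limits_{i>0}\{p_i-i-1\}$, every outgoing differential from $(0,j)$ has zero target, and $\phantom{i}_hE_\infty^{0,j}=\mbox{Ext}^j_R(R_{G,\m},R)$ must itself vanish. The inequality $\mbox{pdim}(R_{G,\m})\le\max\limits_{i>0}\{p_i-i-1\}$ follows from the standard characterization $\mbox{pdim}(R_{G,\m})=\max\{j:\mbox{Ext}^j_R(R_{G,\m},R)\neq 0\}$ available over a polynomial ring.

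For the equality assertion, suppose $H^i(C_\bullet)=0$ for every $i>0$ except a single $i=i_0$, and write $M:=H^{i_0}(C_\bullet)$. The horizontal $E_2$-page is then concentrated in the two columns $i=0$ and $i=i_0$, and the only differential that can be nonzero on any page is $\phantom{i}_hd^{i_0+1}\colon(0,j)\to(i_0,j+i_0+1)$. For $j\geq 1$ both the kernel at $(0,j)$ and the cokernel at $(i_0,j+i_0+1)$ contribute to hyper-derived modules in negative total degree and so must vanish, forcing this differential to be an isomorphism; consequently $\mbox{Ext}^j_R(R_{G,\m},R)\cong\mbox{Ext}^{j+i_0+1}_R(M,R)$ for every $j\geq 1$. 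The largest such $j$ making the right-hand side nonzero is $j=p_{i_0}-i_0-1$, realizing the claimed equality. The main technical obstacle is the degree bookkeeping across the two spectral sequences: verifying that no differential ever enters the $i=0$ column, that outgoing differentials from $(0,j)$ hit exactly the targets described, and that in the single-$i_0$ case convergence of the two spectral sequences to the same hyper-derived modules genuinely forces $\phantom{i}_hd^{i_0+1}$ to be an isomorphism rather than merely an injection or surjection.
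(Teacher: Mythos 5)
Your proof is correct and takes essentially the same route as the paper's: the Cohen--Macaulay hypothesis collapses the vertical spectral sequence onto the diagonal $i=j$, forcing the hyperhomology to vanish outside total degree zero, and both the inequality and the single-nonvanishing-column equality are then read off from the horizontal spectral sequence exactly as in the paper. Your justification of why $d^{i_0+1}$ must be an isomorphism (both its kernel and cokernel sit in negative total degree) is, if anything, slightly more explicit than the paper's.
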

\begin{proof}
Note that if $R/J(\sigma)$ is Cohen-Macaulay of codimension $i$ for every $\sigma\in\D(G)_i$ then
\[
\cR/\cJ[G]_i=\bigoplus_{\sigma\in\D(G)_i} R/J(\sigma)
\]
is also Cohen-Macaulay of codimension $i$.

The terms $\phantom{i}_v E^1_{ij}=\mbox{Ext}^j_R(\cR/\cJ[G]_i,R)=\mbox{Ext}^j_R(\oplus_{\sigma\in\D(G)_i} R/J(\sigma))$ vanish except when $i=j$ since $\cR/\cJ[G]_i$ is Cohen-Macaulay of codimension $i$.  This implies that the vertical filtration collapses to yield $\mbox{Ext}^j_R(\bigoplus_{\sigma\in \D(G)_i} R/J(\sigma))=\phantom{i}_vE^1_{ij}=\phantom{i}_vE^{\infty}_{ij}$.  It follows that the homology of the total complex is concentrated in degree zero (corresponding to the diagonal $i=j$).

Comparing with the horizontal filtration, $\phantom{i}_h E^\infty_{ij}=0$ unless $i=j$; in particular $\phantom{i}_h E^\infty_{0j}=0$ for $j>0$ and the successive quotients $\phantom{i}_h E^r_{0j}$ of $\phantom{i}_h E^2_{0j}=\mbox{Ext}^j_R(R_{G,\m},R)$ eventually stabilize at zero.  Hence $\mbox{Ext}^j_R(H^0(\cR/\cJ[G]),R)$ can only be nonzero if one or more of the terms $\phantom{i}_h E^r_{r-1,j+r}\neq 0$.  These latter terms are subquotients of $\phantom{i}_hE^2_{r-1,j+r}=\mbox{Ext}^{j+r}_R(H^{r-1}(\cR/\cJ[G]))$.  So we see that $\mbox{Ext}^j_R(R_{G,\m},R)\neq 0$ implies that $\mbox{Ext}^{j+r}_R(H^{r-1}(\cR/\cJ[G]))\neq 0$ for some $r\ge 2$.  Since $p=\mbox{pdim}(R_{G,\m})$ is the largest index of a nonvanishing $\mbox{Ext}^p_R(R_{G,\m},R)$, one of the modules $\mbox{Ext}^{p+r}_R(H^{r-1}(\cR/\cJ[G]))$ is nonzero for some $r\ge 2$.  Setting $p_i=\mbox{pdim}(H^i(\cR/\cJ[G]))$, it follows that $p\le \max\{p_i-i-1\}$.

Now suppose that $H^i(\cR/\cJ[G])=0$ except for $i=k>0$.  Then $\phantom{i}_hE^r_{0,j}=\mbox{Ext}^j_R(R_{G,\m},R)$ and $\phantom{i}_hE^r_{kj}=\mbox{Ext}^j_R(H^k(\cR/\cJ[G]))$ for $r=2,\ldots,k+1$, since the differentials are trivial for the range $2\le r\le k-1$.  On page $k+1$ we have $\mbox{Ext}^j(R_{G,\m},R)=\phantom{i}_hE^{k+1}_{0,k+1}\cong\phantom{i}_hE^{k+1}_{k,j+k+1}\cong \mbox{Ext}^{j+k+1}(H^k(\cR/\cJ[G]))$, since the spectral sequence collapses on the $(k+1)^{st}$ page.  It follows that $\mbox{pdim}(R_{G,\m})=\mbox{pdim}(H^k(\cR/\cJ[G]))-k-1$.
\end{proof}

The strongest statement we can obtain (in terms of freeness) is the following characterization due to Schenck-Stiller~\cite[Theorem~3.4]{CohVan}.

\begin{thm}\label{thm:SchenckFree}
Let $S$ be a polynomial ring over a field $\kk$ of characteristic $0$.  Let $(G,\m)$ be a labelled graph with the following two properties.
\begin{itemize}
\item $S/J(\sigma)$ is Cohen-Macaulay of codimension $i$ for every $\sigma\in\D(G)_i$
\item $H^i(\cR/\cJ[G])$ is supported in codimension $\ge i+2$ for all $i>0$
\end{itemize}
Then $S_{G,\m}$ is free iff $H^i(\cR/\cJ[G])=0$ for all $i>0$.
\end{thm}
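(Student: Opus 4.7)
The plan is to use the pair of hyperhomology spectral sequences from the proof of Theorem~\ref{thm:pdimUB}, applied to a Cartan--Eilenberg resolution of $\cR/\cJ[G]$ dualized into $S$. The Cohen--Macaulay hypothesis degenerates the vertical spectral sequence onto the diagonal $i=j$, so the total hyperhomology is concentrated on the diagonal and every $\phantom{i}_hE^\infty_{ij}$ with $i\neq j$ vanishes. Over the regular graded ring $S$, the grade of any nonzero finitely generated module equals the codimension of its support, so the support hypothesis translates to the vanishing $\phantom{i}_hE^2_{ij}=\mbox{Ext}^j_S(H^i(\cR/\cJ[G]),S)=0$ whenever $i>0$ and $j<i+2$.

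The backward direction is immediate: if $H^i(\cR/\cJ[G])=0$ for all $i>0$, then $\phantom{i}_hE^2$ is supported in column $0$, the spectral sequence collapses, and $\mbox{Ext}^j_S(S_{G,\m},S)=\phantom{i}_hE^\infty_{0,j}=0$ for $j>0$, so $S_{G,\m}$ is free.

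For the forward direction, assume $S_{G,\m}$ is free, so $\phantom{i}_hE^2_{0,j}=0$ for $j>0$; combined with the support hypothesis the entire antidiagonal $j-i=1$ of $\phantom{i}_hE^2$ is zero. Suppose for contradiction that some $H^i(\cR/\cJ[G])\neq 0$ for $i>0$, and let $k$ be the maximum such index. Maximality forces columns $i>k$ of $\phantom{i}_hE^2$ to vanish, so every outgoing differential from column $k$ is zero on every page. The entry $\phantom{i}_hE^2_{k,k+2}$ has no outgoing differentials by the above, and its would-be incoming differentials on page $r$ originate at $(k-r+1,k+2-r)$, which lies on the zero antidiagonal $j-i=1$ and is therefore zero on every $E_r$. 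Thus $\phantom{i}_hE^\infty_{k,k+2}=\phantom{i}_hE^2_{k,k+2}=\mbox{Ext}^{k+2}_S(H^k,S)$, and convergence forces this off-diagonal entry to vanish.

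To conclude $H^k=0$ one iterates the analysis on the entries $\phantom{i}_hE^2_{k,k+d}$ for $d\ge 3$. The main obstacle is that for $d\ge 3$ the incoming differentials to $(k,k+d)$ originate on antidiagonals $j-i=d-1\ge 2$ in columns $i<k$, which are not a priori zero. A cascading argument handles this: the vanishing of $\phantom{i}_hE^\infty$ at each entry $(k',k'+d')$ with $k'<k$ imposes injectivity or surjectivity on the interlocking $\phantom{i}_hd^r$ differentials, bootstrapping from the base case $\mbox{Ext}^{k+2}_S(H^k,S)=0$ to eventually yield $\mbox{Ext}^{k+d}_S(H^k,S)=0$ for every $d\ge 2$. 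Since $H^k$ is finitely generated over the polynomial ring $S$ and hence has finite projective dimension, uniform vanishing of $\mbox{Ext}^\bullet_S(H^k,S)$ forces $H^k=0$, the desired contradiction.
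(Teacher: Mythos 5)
The paper does not actually prove this theorem---it defers to Schenck--Stiller \cite{CohVan,Spect}---so you are attempting a genuine proof where the paper cites one. Your setup, the backward direction, and the first step of the forward direction (all incoming and outgoing differentials at $(k,k+2)$ vanish, hence $\mbox{Ext}^{k+2}_S(H^k,S)={}_hE^\infty_{k,k+2}=0$) are correct and follow the standard hyperhomology approach. But the forward direction has a genuine gap exactly where you flag ``a cascading argument handles this.'' For $d\ge 3$ the entry $(k,k+d)$ has vanishing outgoing differentials but potentially nonzero incoming ones from positions $(k-r+1,k+d-r)$ on antidiagonals $j-i=d-1\ge 2$; the vanishing of ${}_hE^\infty_{k,k+d}$ then only says these incoming differentials are eventually jointly surjective onto $E^r_{k,k+d}$, which does \emph{not} force $\mbox{Ext}^{k+d}_S(H^k,S)=0$. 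Pure injectivity/surjectivity bookkeeping cannot close this: already with two nonzero columns the chase only yields isomorphisms $\mbox{Ext}^j_S(H^1,S)\cong \mbox{Ext}^{j+2}_S(H^2,S)$ for $j\ge 3$ together with $\mbox{Ext}^{\le 4}_S(H^2,S)=0$, and there is no abstract contradiction in that pattern of graded modules. So the ``bootstrapping'' is asserted, not proved, and as described it fails.

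The missing ingredient is the codimension behavior of the Ext modules themselves: for a finitely generated $M$ over the regular ring $S$ one has $\mbox{codim}\,\mbox{Ext}^j_S(M,S)\ge j$ for all $j$, with equality when $j=\mbox{grade}(M)$ (localize at a minimal prime of $M$ and use local duality). With this, the clean argument is: assuming some $H^i\neq 0$ for $i>0$, let $\mu=\min\{\mbox{grade}(H^i): i\ge 1,\ H^i\neq 0\}$, attained at $i_0$, and look at $(i_0,\mu)$. All incoming differentials vanish (their sources are $\mbox{Ext}^{\mu-r}_S(H^{i_0-r+1},S)=0$ by minimality of $\mu$, or $\mbox{Ext}^{\mu-i_0-1}_S(S_{G,\m},S)=0$ by freeness since $\mu\ge i_0+2$), so ${}_hE^2_{i_0,\mu}$ carries a finite decreasing filtration by the kernels of the outgoing differentials whose successive quotients embed in subquotients of $\mbox{Ext}^{\mu+r}_S(H^{i_0+r-1},S)$, all of codimension $\ge \mu+2$; hence $\mbox{Ext}^{\mu}_S(H^{i_0},S)$ has codimension $\ge\mu+2$, contradicting the fact that it has codimension exactly $\mu$. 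You should replace the hand-waved cascade with an argument of this kind (or with an explicit citation to \cite[Theorem~3.4]{CohVan}, as the paper does).
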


\begin{proof}
A proof can be found in~\cite{Spect}.
\end{proof}

\section{Hyperplane Arrangements}\label{sec:Preliminaries}

Let $\kk$ be a field of characteristic $0$, $V$ a $\kk$-vector space, and $V^*$ the dual vector space.  A hyperplane arrangement $\A\subset V$ is a union of hyperplanes $H=V(\alpha_H)\subset V$, where $\alpha_H\in V^*$.  The \textit{rank} of a hyperplane arrangement $\A\subset V$ is given by $\mbox{rk}(\A)=\dim V-\dim(\cap_{H\in \A} H)$.  The arrangement $\A\subset V$ is called \textit{essential} if $\mbox{rk}(\A)=\dim V$ and \textit{central} if $\cap_i H_i\neq\emptyset$.  If $\A$ is central, denote by $\A^e$ the essential arrangement obtained by quotienting out the vector subspace $W=\cap_{H\in\A} H$.

The intersection lattice $L_\A$ of $\A$ is the lattice whose elements (flats) are all possible intersections of the hyperplanes of $\A$, ordered with respect to reverse inclusion.  This is a ranked lattice, with rank function the codimension of the flat.  Given a flat $X$, the (closed) subarrangement $\A_X$ is the hyperplane arrangement of those hyperplanes of $\A$ which contain $X$.

If $\A\subset V$ is an arrangement, let $S=\mbox{Sym}(V^*)$ and $Q=\prod_{H\in\A} \alpha_H$, the defining equation of $\A$.  The module of derivations of $\A$, denoted $D(\A)$, is defined by
\[
D(\A)=\{\theta\in\mbox{Der}_\kk(S)| \theta(\alpha_H)\in\langle \alpha_H \rangle\mbox{ for all } H\in\A \}.
\]
If $D(\A)$ is free as an $S$-module, we say $\A$ is free.
\begin{defn}
A multi-arrangement $(\A,\m)$ is an arrangement $\A\subset V$, along with a function $\m:\A\rightarrow \Z_{>0}$ assigning a positive integer to every hyperplane.  Let $S=\mbox{Sym}(V^*)$.  The module of multi-derivations $D(\A,\m)$ is defined by
\[
D(\A,\m)=\{\theta\in\mbox{Der}_\kk(S)|\theta(\alpha_H)\in\langle \alpha_H^{\m(H)} \rangle\mbox{ for all } H\in\A\}
\]
\end{defn}
If $D(\A,\m)$ is free as an $S$-module then we say that the multi-arrangement $(\A,\m)$ is free and $\m$ is a \textit{free multiplicity} of $\A$.  If $m$ is a positive integer and $\m(H)=m$ for every $H\in \A$, then $(\A,\m)$ is denoted $\A^{(m)}$ and $D(\A,\m)$ is denoted $D^{(m)}(\A)$ to be consistent with~\cite{TeraoMultiDer}.

\begin{lem}\label{lem:DerivationMatrix}
Let $(\A,\m)$ be a multi-arrangement in $V\cong \kk^n$.  Let $\alpha_i$ be the form defining the hyperplane $H_i$, and set $m_i=\m(H_i)$.  The module $D(\A,\m)$ of multiderivations on $\A$ is isomorphic to the kernel of the map
\[
\psi:R^{n+d}\rightarrow R^d,
\]
where $\psi$ is the matrix
\[
\begin{pmatrix}
& \vline & \alpha_1^{m_1} & & \\
B & \vline  & & \ddots & \\
& \vline & & & \alpha_k^{m_k}
\end{pmatrix}
\]
and $B$ is the matrix with entry $B_{ij}=a_{ij}$, where $\alpha_j=\sum_{i,j} a_{ij} x_i$.
\end{lem}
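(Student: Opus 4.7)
The plan is to unwind the definitions and match $D(\A,\m)$ directly with $\ker \psi$ via the projection onto the first $n$ coordinates. The entire content is the observation that writing a derivation in coordinates turns the condition $\theta(\alpha_j) \in \langle \alpha_j^{m_j} \rangle$ into a single matrix-kernel problem.

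First I would fix the standard basis $\partial/\partial x_1, \ldots, \partial/\partial x_n$ of $\mbox{Der}_\kk(S)$ over $S$ and identify a derivation $\theta$ with the tuple $(f_1, \ldots, f_n) \in R^n$ via $\theta = \sum_i f_i\,\partial/\partial x_i$. Since $\alpha_j = \sum_i a_{ij} x_i$, one reads off
\[
\theta(\alpha_j) = \sum_{i=1}^n a_{ij} f_i,
\]
which is the $j$-th entry of $B\mathbf{f}$ in the convention fixed by the lemma. Hence the defining equations of $\ker \psi$ read $B\mathbf{f} + \mathrm{diag}(\alpha_j^{m_j})\,\mathbf{g} = 0$, i.e., $\theta(\alpha_j) = -g_j\,\alpha_j^{m_j}$ for each $j$.

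Next I would use that $S$ is a polynomial ring, hence a domain: $\alpha_j^{m_j}$ is a nonzerodivisor, so there is \emph{at most one} $g_j \in S$ satisfying $\theta(\alpha_j) = -g_j\,\alpha_j^{m_j}$, and such a $g_j$ exists exactly when $\theta(\alpha_j) \in \langle \alpha_j^{m_j} \rangle$. This uniqueness is the only non-formal ingredient in the proof.

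Finally I would define the $R$-linear projection $\pi: \ker \psi \to R^n$ by $((f_i),(g_j)) \mapsto (f_1,\ldots,f_n)$ and verify that, under the basis identification of the first paragraph, its image is precisely $D(\A,\m)$. Surjectivity amounts to pairing any $\theta \in D(\A,\m)$ with its uniquely determined $g_j$; injectivity is exactly the uniqueness of $g_j$ given $(f_i)$. The only obstacle is bookkeeping: one must reconcile the conventions for the row/column indexing of $B$ with the sign convention implicit in the block form of $\psi$. Once this is settled the argument is essentially a tautology.
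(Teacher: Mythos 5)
Your proof is correct and complete; note that the paper itself does not prove this lemma but simply cites the discussion preceding Theorem~4.6 of the reference, and your argument is exactly the standard verification that source supplies. You correctly isolate the one non-formal point --- that $\alpha_j^{m_j}$ is a nonzerodivisor in the polynomial ring, so the auxiliary coordinate $g_j$ is uniquely determined by $\mathbf{f}$, making the projection $\ker\psi\to D(\A,\m)$ an isomorphism rather than merely a surjection --- and the remaining indexing issue with $B$ is a genuine sloppiness in the lemma's own statement (the paper later confirms the intended convention by describing $B$ as the transpose of the boundary map), not a gap in your argument.
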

\begin{proof}
See the comments preceding~\cite[Theorem~4.6]{DimSeries}.
\end{proof}

\begin{prop}\label{prop:pdimLB}
Let $(\A,\m)$ be a multi-arrangement, $X\in L_\A$, and $(\A_X,\m_X)$ the corresponding closed subarrangement with restricted multiplicities.  Then \\
$\mbox{pdim}(D(\A,\m))\ge \mbox{pdim}(D(\A_X,\m_X))$.
\end{prop}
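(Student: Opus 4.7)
The plan is to localize at the prime ideal $\mathfrak{p}_X \subset S$ defining the flat $X$, exhibit an isomorphism $D(\A,\m)_{\mathfrak{p}_X} \cong D(\A_X, \m_X)_{\mathfrak{p}_X}$, and then bridge the gap between projective dimension at this localization and projective dimension over the full ring $S$.

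For the first step I would use the matrix presentation of Lemma~\ref{lem:DerivationMatrix}: $D(\A,\m) = \ker \psi$, where $\psi$ has a block $B$ built from the coefficients of the linear forms $\alpha_H$ alongside a diagonal block whose entries are the forms $\alpha_H^{\m(H)}$. For $H \notin \A_X$ one has $\alpha_H \notin \mathfrak{p}_X$, so the entry $\alpha_H^{\m(H)}$ becomes a unit in $S_{\mathfrak{p}_X}$. Each such unit is the only nonzero entry of its row in the diagonal block, so it can serve as a pivot for invertible row and column operations that delete that row and the corresponding column without changing the kernel. The remaining matrix is exactly the matrix associated by Lemma~\ref{lem:DerivationMatrix} to $(\A_X, \m_X)$, localized at $\mathfrak{p}_X$. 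Combined with the general fact that $\mbox{pdim}_{S_{\mathfrak{p}_X}}(M_{\mathfrak{p}_X}) \le \mbox{pdim}_S(M)$, this yields
\[
\mbox{pdim}_S(D(\A,\m)) \;\ge\; \mbox{pdim}_{S_{\mathfrak{p}_X}}\bigl(D(\A_X, \m_X)_{\mathfrak{p}_X}\bigr).
\]

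What remains is to show $\mbox{pdim}_{S_{\mathfrak{p}_X}}(D(\A_X, \m_X)_{\mathfrak{p}_X}) = \mbox{pdim}_S(D(\A_X, \m_X))$. I would choose coordinates so that $\mathfrak{p}_X = (x_1, \ldots, x_r)$ with $r = \mbox{rk}(\A_X)$; then every $\alpha_H$ for $H \in \A_X$ lies in the subring $T := \kk[x_1, \ldots, x_r]$, and $\partial_{r+1}, \ldots, \partial_n$ annihilate every such $\alpha_H$. Splitting a derivation accordingly and reading off the multi-derivation conditions gives
\[
D(\A_X, \m_X) \;\cong\; D' \otimes_T S \;\oplus\; S^{n-r},
\]
where $D'$ denotes the multi-derivation module of $\A_X$ viewed as an essential arrangement in $\kk^r$. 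A minimal graded free resolution of $D'$ over $T$ has all differentials inside the irrelevant maximal ideal of $T$, which sits inside both $\mathfrak{m}_S$ and $\mathfrak{p}_X S_{\mathfrak{p}_X}$. Tensoring with $S$ and with $S_{\mathfrak{p}_X}$ (both flat over $T$) therefore yields minimal free resolutions of equal length on each side, delivering the desired equality of projective dimensions and closing the argument.

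The main obstacle I anticipate is this last paragraph: the matrix pivot in Step~1 is essentially formal, but one must carefully verify both that the splitting into an essential part plus a free summand $S^{n-r}$ is genuine and that minimality of the $T$-resolution actually survives both tensor operations. This is where centrality of $\A_X$ at $X$ does the real work---$\mathfrak{p}_X$ must be the irrelevant ideal of the essential subring $T$, and nothing smaller, for the two tensored resolutions to remain minimal.
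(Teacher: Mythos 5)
Your argument is correct, but note that the paper does not actually prove this proposition---it simply cites \cite[Proposition~1.7]{AbeSignedEliminable}---so what you have written is a self-contained replacement for an outsourced proof, and it is essentially the standard localization argument underlying that cited result. Your two steps are both sound: the pivot argument on the matrix of Lemma~\ref{lem:DerivationMatrix} gives the localization isomorphism $D(\A,\m)_{\mathfrak{p}_X}\cong D(\A_X,\m_X)_{\mathfrak{p}_X}$ (the unit $\alpha_H^{\m(H)}$ for $H\notin\A_X$ is the unique nonzero entry of its \emph{column}, so row/column reduction removes that row and column without changing the kernel), and the semicontinuity $\mbox{pdim}_{S_{\mathfrak{p}}}(M_{\mathfrak{p}})\le\mbox{pdim}_S(M)$ is standard. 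For the remaining equality $\mbox{pdim}_{S_{\mathfrak{p}_X}}(D(\A_X,\m_X)_{\mathfrak{p}_X})=\mbox{pdim}_S(D(\A_X,\m_X))$, the splitting $D(\A_X,\m_X)\cong (D'\otimes_T S)\oplus S^{n-r}$ that you invoke is exactly the product decomposition of Lemma~\ref{lem:DerSum} applied to $\A_X=\A_X^e\times\emptyset$, so you need not re-derive it; the key observation that the differentials of a minimal graded $T$-free resolution of $D'$ have entries in $(x_1,\ldots,x_r)$, which lies inside both $\mathfrak{m}_S$ and $\mathfrak{p}_XS_{\mathfrak{p}_X}$, then gives minimality after either flat base change, and the chain of (in)equalities closes. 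What your approach buys is transparency: it isolates precisely why closed subarrangements control projective dimension (localization at the flat), which is also the mechanism behind Lemma~\ref{lem:localize} later in the paper, whereas the paper's citation hides this.
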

\begin{proof}
This is~\cite[Proposition~1.7]{AbeSignedEliminable}.
\end{proof}

\begin{lem}[Ziegler~\cite{ZieglerMulti}]\label{lem:globalUB}
For any arrangement $\A\subset V,\mbox{pdim}(D(\A,\m))\le \mbox{rk}(\A)-2$.  In particular, if $\mbox{rk}(\A)\le 2$ then $(\A,\m)$ is free.
\end{lem}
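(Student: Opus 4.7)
The plan is to combine the matrix description of Lemma~\ref{lem:DerivationMatrix} with the Auslander-Buchsbaum formula, after first reducing to the essential case. The key observation is that the matrix presentation exhibits $D(\A,\m)$ as the kernel of a map between free modules---in fact, as a second syzygy---which forces $\mbox{depth}(D(\A,\m))\ge 2$ whenever $\mbox{rk}(\A)\ge 2$.

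For the reduction, suppose $W=\bigcap_{H\in\A}H$ is positive-dimensional, with $\dim W=n-r$ and $r=\mbox{rk}(\A)$. Choose linear coordinates so that $W=V(x_1,\ldots,x_r)$, and set $S'=\kk[x_1,\ldots,x_r]\subseteq S$. Each defining form $\alpha_H$ lies in $S'$, so for $\theta=\sum_{j=1}^n f_j\,\partial_{x_j}$ the condition $\theta(\alpha_H)\in\langle\alpha_H^{\m(H)}\rangle$ constrains only $f_1,\ldots,f_r$. This produces a decomposition
\[
D(\A,\m)\;\cong\;\bigl(D(\A^e,\m)\otimes_{S'}S\bigr)\;\oplus\;\bigoplus_{j=r+1}^n S\,\partial_{x_j},
\]
where $D(\A^e,\m)$ is computed over $S'$. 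Since $S$ is a free $S'$-algebra, flat base change gives $\mbox{pdim}_S(D(\A^e,\m)\otimes_{S'}S)=\mbox{pdim}_{S'}(D(\A^e,\m))$, and the free summand contributes nothing. Thus it suffices to prove the bound for essential $\A$, where $\dim S=r$.

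In the essential case, Lemma~\ref{lem:DerivationMatrix} identifies $D(\A,\m)$ with $\ker(\psi)$ for a matrix $\psi\colon S^{r+d}\to S^d$ with $d=|\A|$. Letting $N=\mbox{Im}(\psi)\subseteq S^d$, we obtain the four-term exact sequence
\[
0\to D(\A,\m)\to S^{r+d}\xrightarrow{\psi}S^d\to S^d/N\to 0,
\]
which displays $D(\A,\m)$ as a second syzygy. Splitting into $0\to D(\A,\m)\to S^{r+d}\to N\to 0$ and $0\to N\to S^d\to S^d/N\to 0$ and iterating the depth lemma yields $\mbox{depth}_S(D(\A,\m))\ge\min(r,2)$. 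For $r\ge 2$ this gives $\mbox{depth}\ge 2$, and Auslander-Buchsbaum then produces $\mbox{pdim}(D(\A,\m))=r-\mbox{depth}(D(\A,\m))\le r-2$.

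For the ``in particular'' clause, $r\le 2$ combined with the inequality forces $\mbox{pdim}(D(\A,\m))\le 0$, and since finitely generated graded projective modules over a polynomial ring are free, $D(\A,\m)$ is free; the edge cases $r\le 1$ are also immediate, as a torsion-free module over a field or a PID is free. The main obstacle is really the bookkeeping in the reduction step---verifying the direct-sum decomposition and confirming that projective dimension is preserved under the polynomial extension $S'\hookrightarrow S$; once this is in place, the essential case reduces to the short syzygy-and-depth-lemma computation outlined above.
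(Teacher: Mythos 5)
Your proof is correct and follows essentially the same route as the paper: reduce to the essential case, use Lemma~\ref{lem:DerivationMatrix} to exhibit $D(\A,\m)$ as a second syzygy over a polynomial ring in $\mbox{rk}(\A)$ variables, and conclude. The only (cosmetic) differences are that the paper performs the reduction by modding out a regular sequence of linear forms rather than via your direct-sum decomposition and flat base change, and it cites the Hilbert syzygy theorem where you use the depth lemma together with Auslander--Buchsbaum; these are interchangeable packagings of the same argument.
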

\begin{proof}
Let $k=\mbox{rk}(\A)$, $W=\cap_{H\in\A} H$, and $S=\mbox{Sym}(V^*)$.  Let $l_1,\ldots,l_{\dim V-k}\in S$ be a maximal choice of linearly independent forms not vanishing on $W$.  These form a regular sequence on $D(\A,\m)$.  Modding out by these gives a polynomial ring in $k=\mbox{rk}(\A)$ variables.  In this new ring, the image $\overline{D(\A,\m)}$ is still the kernel of the matrix $\psi$ from Lemma~\ref{lem:DerivationMatrix}, hence is a second syzygy.  The result now follows from the Hilbert syzygy theorem.
\end{proof}

Given two multi-arrangements $(\A_1,\m_1),(\A_2,\m_2)$ with $\A_1\subset V_1,\A_2\subset V_2$, the product $(\A_1,\m_1)\times(\A_2,\m_2)$ is the multi-arrangement $(\A_1\times\A_2,\m)$ in $V_1\oplus V_2$ with underlying arrangement $\A_1\times\A_2=\{H_1\oplus V_2|H\in\A_1\}\cup\{V_1\oplus H_2| H_2\in\A_2\}$, and multiplicities $\m(H_1\oplus V_2)=\m_1(H_1)$, $\m(V_1\oplus H_2)=\m_2(H_2)$.  If $\A$ does not split as the product of nontrivial subarrangements, then $\A$ is \textit{irreducible}.

\begin{lem}\label{lem:DerSum}
Given multi-arrangements $(\A_1,\m_1)$ and $(\A_2,\m_2)$, $D((\A_1,\m_1)\times(\A_2,\m_2))\cong D(\A_1,\m_1)\oplus D(\A_2,\m_2)$.
\end{lem}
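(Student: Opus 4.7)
The plan is to produce an explicit direct sum decomposition by exploiting the canonical splitting of $\kk$-derivations on a tensor product of polynomial rings. Let $S_i=\mbox{Sym}(V_i^*)$ and $S=\mbox{Sym}((V_1\oplus V_2)^*)\cong S_1\otimes_\kk S_2$. The first step is to observe that any $\theta\in\mbox{Der}_\kk(S)$ decomposes uniquely as $\theta=\theta_1+\theta_2$, where $\theta_1=\sum_i f_i\,\partial/\partial x_i$ uses only partials in the $V_1^*$-coordinates $x_i$ and $\theta_2=\sum_j g_j\,\partial/\partial y_j$ uses only partials in the $V_2^*$-coordinates $y_j$, with $f_i,g_j\in S$ in general.

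Next, because each form $\alpha_{H_1}$ for $H_1\in\A_1$ lies in $S_1$ and each $\alpha_{H_2}$ for $H_2\in\A_2$ lies in $S_2$, we have $\theta(\alpha_{H_1})=\theta_1(\alpha_{H_1})$ and $\theta(\alpha_{H_2})=\theta_2(\alpha_{H_2})$, since $\theta_2$ kills $S_1$ and vice versa. Hence the multiderivation condition defining $D(\A_1\times\A_2,\m)$ separates into independent conditions on $\theta_1$ and $\theta_2$.

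The core step is then to identify the set of valid $\theta_1$'s with $D(\A_1,\m_1)\otimes_{S_1}S$. I would expand each coefficient $f_i=\sum_\mu g_{i,\mu}h_\mu$ against a $\kk$-basis $\{h_\mu\}$ of $S_2$, rewriting $\theta_1=\sum_\mu \eta_\mu\cdot h_\mu$ with $\eta_\mu=\sum_i g_{i,\mu}\,\partial/\partial x_i\in\mbox{Der}_\kk(S_1)$. Because $S$ is a free $S_1$-module with basis $\{h_\mu\}$ and the ideal $\langle \alpha_{H_1}^{\m_1(H_1)}\rangle_S$ is generated by an element of $S_1$, divisibility of $\theta_1(\alpha_{H_1})$ by $\alpha_{H_1}^{\m_1(H_1)}$ in $S$ is equivalent to the same divisibility of each $\eta_\mu(\alpha_{H_1})$ in $S_1$. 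Thus $\eta_\mu\in D(\A_1,\m_1)$ for every $\mu$, and a symmetric argument handles $\theta_2$.

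The main obstacle is essentially bookkeeping: the right-hand side $D(\A_1,\m_1)\oplus D(\A_2,\m_2)$ in the statement is tacitly regarded as an $S$-module via extension of scalars from $S_i$ to $S$, and one must carefully track this identification through the isomorphism. As a cleaner alternative, the same splitting can be read directly off Lemma~\ref{lem:DerivationMatrix}: after reordering, the presentation matrix $\psi$ for $\A_1\times\A_2$ is block-diagonal because the linear forms of $\A_1$-hyperplanes have zero coefficients in the $V_2^*$-coordinates and vice versa, so its kernel splits as a direct sum, and flatness of $S$ over each $S_i$ yields the desired isomorphism.
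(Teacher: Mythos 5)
Your argument is correct, and it is genuinely more than what the paper does: the paper's entire ``proof'' of Lemma~\ref{lem:DerSum} is a citation of \cite[Lemma~1.4]{TeraoCharPoly}, whereas you supply the standard self-contained argument behind that cited lemma. The key steps all check out: the decomposition $\theta=\theta_1+\theta_2$ according to which block of partials appears, the observation that $\theta_2$ annihilates $S_1$ so the defining conditions decouple, and the expansion of the coefficients of $\theta_1$ against a $\kk$-basis $\{h_\mu\}$ of $S_2$ --- since $S=\bigoplus_\mu S_1 h_\mu$ as an $S_1$-module and $\alpha_{H_1}^{\m_1(H_1)}\in S_1$, membership in $\alpha_{H_1}^{\m_1(H_1)}S$ is indeed checked coordinatewise, giving $\eta_\mu\in D(\A_1,\m_1)$ for all $\mu$ and hence the identification of the valid $\theta_1$'s with $D(\A_1,\m_1)\otimes_{S_1}S$. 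You are also right to flag that the statement's direct sum must be read via extension of scalars to $S$; that is exactly how the cited lemma is phrased. Your alternative route through Lemma~\ref{lem:DerivationMatrix} is equally valid and arguably better adapted to this paper, since the block-diagonality of $\psi$ plus flatness of $S$ over $S_i$ (kernels commute with flat base change) is the same mechanism the paper later exploits for graphic arrangements via Remark~\ref{rem:Product}. What the citation buys the paper is brevity; what your argument buys is transparency about where the $\otimes_{S_i}S$ really comes from.
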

\begin{proof}
See~\cite[Lemma~1.4]{TeraoCharPoly}.
\end{proof}

\subsection{Graphic multi-arrangements}
Let $G$ be a graph with no loops or multiple edges, so that each edge of $G$ is uniquely defined by its incident vertices (we allow isolated vertices).  Let $E(G)$ denote the set of edges of $G$ and $V(G)$ the set of vertices of $G$.  Set $c(G)$ to be the number of connected components of $G$, and $v=|V(G)|$.  The \textit{graphic} arrangement corresponding to $G$ is
\[
\A_G=\bigcup_{\substack{{i,j}\in E(G)\\ i<j} } V(x_i-x_j)\subset\R^v.
\]

\begin{remark}
If $(\A_G,\m)$ is a graphic multi-arrangement, the function $\m: \A_G \rightarrow \N_{>0}$ can be viewed as a function $\m:E(G)\rightarrow \N_{>0}$. If we label the vertices of $G$ by integers and $e=\{i,j\}\in E(G)$ then we will write $\alpha_e$ or $\alpha_{ij}$ for the form $x_i-x_j$ and $m_e$ or $m_{ij}$ for the value of $\m$ on the hyperplane $V(\alpha_e)$.
\end{remark}

Note that $\mbox{rk}(\A_G)$ is the number of edges in a spanning tree of $G$ (spanning forest if $G$ is not connected).  Hence $\mbox{rk}(\A_G)=v-c(G)$.  Applying Lemma~\ref{lem:globalUB} we obtain

\begin{cor}\label{cor:globalUB}
Let $(\A_G,\m)$ be a graphic multi-arrangement on a graph $G$ with $c(G)$ components.  Then $\mbox{pdim}(\A_G)\le |V(G)|-c(G)-2$.  In particular, if every connected component of $G$ has at most three vertices, then $(\A_G,\m)$ is free.
\end{cor}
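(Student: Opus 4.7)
The plan is to derive the corollary as a direct application of Lemma~\ref{lem:globalUB}, using the formula $\mbox{rk}(\A_G) = |V(G)| - c(G)$ noted just before the statement. First I would observe that this rank formula holds because a spanning forest of $G$ has exactly $|V(G)| - c(G)$ edges, and the defining forms $x_i - x_j$ for edges in such a spanning forest are linearly independent while any other edge introduces a dependency. Plugging this into Lemma~\ref{lem:globalUB} gives
\[
\mbox{pdim}(D(\A_G,\m)) \le \mbox{rk}(\A_G) - 2 = |V(G)| - c(G) - 2,
\]
which is the first assertion.

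For the ``in particular'' clause, a naive application of the rank bound to $\A_G$ is not enough, since a graph with many components of three vertices can still have large total rank. Instead I would decompose $G$ into its connected components $G_1,\ldots,G_{c(G)}$ and note that the graphic arrangement factors as a product $\A_G = \A_{G_1}\times\cdots\times\A_{G_{c(G)}}$ (in the sense defined before Lemma~\ref{lem:DerSum}), with the multiplicity $\m$ splitting correspondingly. Lemma~\ref{lem:DerSum} then gives
\[
D(\A_G,\m) \cong \bigoplus_{i=1}^{c(G)} D(\A_{G_i},\m|_{E(G_i)}),
\]
so $D(\A_G,\m)$ is free iff each summand is free.

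For each connected component $G_i$ with $|V(G_i)|\le 3$, we have $\mbox{rk}(\A_{G_i}) = |V(G_i)| - 1 \le 2$, so Lemma~\ref{lem:globalUB} (the ``in particular'' half) yields freeness of $D(\A_{G_i},\m|_{E(G_i)})$. A direct sum of free modules is free, completing the argument. I do not expect any genuine obstacle here; the only point requiring care is remembering to pass to the product decomposition before invoking Lemma~\ref{lem:globalUB} for the freeness conclusion, rather than attempting to apply the rank bound to the whole arrangement at once.
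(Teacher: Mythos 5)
Your argument is correct and follows the same route as the paper: the first inequality is precisely the paper's one-line derivation, substituting $\mbox{rk}(\A_G)=|V(G)|-c(G)$ (edges of a spanning forest) into Lemma~\ref{lem:globalUB}. For the ``in particular'' clause you are actually more careful than the paper, which presents the whole corollary as an immediate consequence of Lemma~\ref{lem:globalUB}: as you correctly observe, the global rank bound alone does not yield freeness when $G$ is disconnected (a disjoint union of $c(G)$ triangles has rank $2c(G)$), and the decomposition $\A_G=\A_{G_1}\times\cdots\times\A_{G_{c(G)}}$ together with Lemma~\ref{lem:DerSum} (cf.\ Remark~\ref{rem:Product}) is exactly the justification needed to reduce to the rank $\le 2$ case component by component. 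No gaps.
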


\begin{defn}\label{defn:inducedSubgraph}
A subgraph $H\subset G$ is an \textit{induced subgraph} if every edge in $E(G)$ connecting vertices of $H$ is in $E(H)$.  A \textit{partition} of $G$ is a disjoint union of induced subgraphs of $G$ containing all the vertices of $G$.
\end{defn}

\begin{remark}
Disjoint unions of induced subgraphs of $G$ are in one to one correspondence with equivalence relations on $V(G)$, or partitions of $V(G)$, justifying the terminology.
\end{remark}

\begin{lem}\label{lem:GraphArrLattice}
The set of all partitions of $G$ form a lattice under containment which is isomorphic to $L_{\A_G}$.	
\end{lem}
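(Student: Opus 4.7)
The plan is to write down an explicit order-preserving bijection between partitions of $G$ (ordered by refinement) and flats of $L_{\A_G}$ (ordered by reverse inclusion), and conclude it is a lattice isomorphism. To a partition $P$ with blocks $V_1,\ldots,V_k$, associate
\[
\phi(P)=\bigcap_{\substack{e=\{i,j\}\in E(G)\\ \{i,j\}\subseteq V_l \text{ for some }l}} V(\alpha_e)\in L_{\A_G},
\]
the intersection of the hyperplanes of $\A_G$ coming from edges internal to a block. The blocks of $P$ are recovered from $\phi(P)$ as the equivalence classes of the relation ``$i\sim j$ iff $x_i=x_j$ holds on $\phi(P)$,'' which gives injectivity of $\phi$.

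For surjectivity, take $X\in L_{\A_G}$ and let $S=\{e\in E(G)\mid X\subseteq V(\alpha_e)\}$ be the full set of hyperplanes of $\A_G$ containing $X$, so that $X=\bigcap_{e\in S}V(\alpha_e)$. The connected components of the subgraph $(V(G),S)$ are the vertex sets $V_l$ of a partition of $V(G)$, and by construction $S$ contains every edge of $G$ whose two endpoints lie in a common $V_l$; hence each $V_l$ carries its full induced subgraph in $G$. This produces a partition $\psi(X)$ of $G$ in the sense of Definition~\ref{defn:inducedSubgraph}, and one checks directly that $\phi(\psi(X))=X$.

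For the order, if $P$ refines $P'$ then every edge internal to a block of $P$ is also internal to a block of $P'$, so $\phi(P)$ is defined as an intersection over a subset of the hyperplanes defining $\phi(P')$; therefore $\phi(P)\supseteq\phi(P')$, which in $L_{\A_G}$ (ordered by reverse inclusion) says $\phi(P)\le\phi(P')$. Combined with the bijection from the previous paragraphs this delivers the lattice isomorphism. The main subtlety requiring care is well-definedness of $\psi$: one must use the \emph{saturated} set $S$ of all hyperplanes of $\A_G$ through $X$ so that each block carries all of its $G$-edges, which is precisely the induced-subgraph condition built into Definition~\ref{defn:inducedSubgraph}.
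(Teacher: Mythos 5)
Your construction follows essentially the same route as the paper's: partitions map to flats by intersecting the hyperplanes of edges internal to blocks, and flats map back to partitions via the equivalence relation generated by the edges whose defining forms vanish on the flat. You are in fact more explicit than the paper about surjectivity (the verification that $\phi(\psi(X))=X$ using the saturated edge set $S$) and about order-preservation; those parts are correct.

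There is, however, a genuine gap in your injectivity step, and it traces back to Definition~\ref{defn:inducedSubgraph}: a ``partition of $G$'' there is \emph{any} disjoint union of induced subgraphs covering $V(G)$, with no requirement that the blocks be connected. If a block $V_l$ induces a disconnected subgraph of $G$, then the relation ``$x_i=x_j$ on $\phi(P)$'' only identifies vertices lying in a common connected component of the subgraph induced on $V_l$, so your recovery procedure does not return $P$. Concretely, let $G$ be the path on vertices $1,2,3$ with edges $\{1,2\}$ and $\{2,3\}$, and let $P$ have blocks $\{1,3\}$ and $\{2\}$: the set of internal edges is empty, $\phi(P)$ is the whole ambient space, and your recovery returns the discrete partition, so $\phi$ is not injective on all partitions in the sense of Definition~\ref{defn:inducedSubgraph}. (This also shows the lemma cannot hold as literally stated: this $G$ has five partitions of its vertex set but $L_{\A_G}$ has only four flats.) The repair is to restrict to partitions all of whose blocks induce \emph{connected} subgraphs of $G$ --- precisely the partitions your surjectivity construction produces, and precisely what is needed elsewhere in the paper. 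The paper's own proof is terse enough to leave this same point unaddressed, but since your argument explicitly asserts injectivity via the recovery claim, you need to either add the connectedness restriction or note that $\phi$ identifies each partition with the unique connected-block partition having the same internal edge set.
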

\begin{proof}
A flat of $L_{\A_G}$ is obtained from a partition of $G$ by intersecting hyperplanes corresponding to edges of the partition.  Likewise, given a flat $X$ of $L_{\A_G}$, define an equivalence relation $\sim_X$ on the vertices of $G$ by $s\sim_X t$ if $s$ is connected to $t$ by a sequence of edges $\{i,j\}$ so that $x_i-x_j$ vanishes on $X$.  The partition of the vertices corresponding to the equivalence classes of $\sim_X$ yields the corresponding union of induced subgraphs of $G$.  Furthermore, containment of partitions corresponds to reverse containment of the corresponding flats, so the lattice of partitions is isomorphic to $L_{\A_G}$.
\end{proof}

\begin{remark}\label{rem:indsubFlat}
Given a partition $H$ of $G$, the closed subarrangement $(\A_G)_H$ (viewing $H$ as a flat) is almost the same as the arrangement $\A_H$ (viewing $H$ as a graph).  The only difference is the ambient space: $(\A_G)_H$ is an arrangement in $\kk^{|V(G)|}$ while $\A_H$ is in $\kk^{|V(H)|}$.  For convenience we will make the convention that, whenever $H$ is a partition of $G$, $\A_H$ refers to $(\A_G)_H$.
\end{remark}

\begin{cor}\label{cor:pdimLBGraph}
Let $(\A_G,\m)$ be a graphic multi-arrangement and $H$ a partition of $G$.  Then $\mbox{pdim}(D(\A_G,\m))\ge \mbox{pdim} (D(\A_H,\m_H))$.
\end{cor}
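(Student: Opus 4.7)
The plan is to reduce this corollary directly to Proposition~\ref{prop:pdimLB}, which gives the analogous lower bound for closed subarrangements at flats of a general multi-arrangement. The only work is translating from the language of graph partitions to the language of flats in $L_{\A_G}$.

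First, given the partition $H$ of $G$, I would invoke Lemma~\ref{lem:GraphArrLattice} to produce the corresponding flat $X \in L_{\A_G}$: explicitly, $X = \bigcap_{\{i,j\} \in E(H)} V(x_i - x_j)$, where $E(H)$ is the union of the edge sets of the induced subgraphs making up the partition. By the isomorphism in Lemma~\ref{lem:GraphArrLattice}, $X$ is a well-defined element of $L_{\A_G}$ and encodes the same combinatorial data as $H$.

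Next, I would apply Remark~\ref{rem:indsubFlat} to identify the closed subarrangement $(\A_G)_X$ with $\A_H$ (and the restricted multiplicity $\m_X$ with $\m_H$) under the stated convention about ambient space. Under this identification, $D((\A_G)_X, \m_X)$ and $D(\A_H, \m_H)$ agree as $S$-modules, so in particular they have the same projective dimension.

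Finally, Proposition~\ref{prop:pdimLB} applied to $(\A_G, \m)$ and the flat $X$ yields $\mbox{pdim}(D(\A_G,\m)) \ge \mbox{pdim}(D((\A_G)_X,\m_X)) = \mbox{pdim}(D(\A_H,\m_H))$, which is the desired inequality. There is no substantive obstacle here; the corollary is really just a dictionary translation of Proposition~\ref{prop:pdimLB} into the graphic setting, with the two lemmas of this subsection providing the dictionary.
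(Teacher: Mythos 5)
Your proposal is correct and follows exactly the paper's own argument: translate the partition $H$ into a flat of $L_{\A_G}$ via Lemma~\ref{lem:GraphArrLattice} (with Remark~\ref{rem:indsubFlat} handling the identification of $(\A_G)_H$ with $\A_H$) and then apply Proposition~\ref{prop:pdimLB}. The paper's proof is just a terser version of the same dictionary translation.
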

\begin{proof}
$H$ corresponds to a flat in $L_{\A_G}$ by Lemma~\ref{lem:GraphArrLattice}, so by Proposition~\ref{prop:pdimLB} $\mbox{pdim}(\A_G)\ge \mbox{pdim} (\A_G)_H$.
\end{proof}

\begin{remark}\label{rem:Product}
If $G_1$ and $G_2$ are graphs with multiplicities $\m_1,\m_2$ and $G_1\sqcup G_2$ is their disjoint union with multiplicity function $\m$ restricting to $\m_1$ on $G_1$ and $\m_2$ on $G_2$, then $(\A_{G_1\sqcup G_2},\m)=(\A_{G_1},\m_1)\times(\A_{G_2},\m_2)$.
\end{remark}

Recall that the \textit{blocks} of $G$ are the maximal connected components with no cut vertex.

\begin{lem}\label{lem:Product}
Let $G$ be a connected graph.  Then
\begin{enumerate}
\item $\A^e_G$ is irreducible iff $G$ does not have a cut vertex.
\item Let $G_1,\ldots,G_k$ be the blocks of $G$.  Then $\A^e_G=\A^e_{G_1}\times\cdots\times \A^e_{G_k}$ is the decomposition of $\A^e_G$ into irreducible factors.
\end{enumerate}
\end{lem}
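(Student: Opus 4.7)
The plan is to work in the essential ambient space $(V/W)^\ast$, where $W=\cap_{H\in\A_G}H$. Choosing a vertex $v_0$ and coordinates $y_i=x_i-x_{v_0}$ for $i\neq v_0$ identifies $(V/W)^\ast$ with $\mathrm{span}\{y_i\}$, and each edge form $\alpha_e$ becomes either $y_i-y_j$ (if $v_0\notin e$) or $\pm y_i$ (if $v_0\in e$). A nontrivial product decomposition of $\A^e_G$ corresponds exactly to a direct-sum decomposition $(V/W)^\ast=U_1\oplus U_2$ with $U_1,U_2\neq 0$ and each $\alpha_e$ lying in $U_1$ or $U_2$; equivalently, to a partition $E(G)=E_1\sqcup E_2$ (letting $E_i=\{e:\alpha_e\in U_i\}$) with both parts nonempty and $\mathrm{span}\{\alpha_e:e\in E_1\}\cap\mathrm{span}\{\alpha_e:e\in E_2\}=0$.

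For the forward direction of (1), I would argue the contrapositive: suppose $v$ is a cut vertex and let $W_1,\ldots,W_r$ be the vertex sets of the connected components of $G\setminus v$. Set $G_1$ = induced subgraph on $W_1\cup\{v\}$ and $G_2$ = induced subgraph on $(V(G)\setminus W_1)\cup\{v\}$. Choosing $v_0=v$ in the coordinate setup above, every form $\alpha_e$ with $e\in E(G_i)$ involves only coordinates $\{y_j:j\in V(G_i)\setminus\{v\}\}$; these two coordinate sets are disjoint and together span $(V/W)^\ast$, yielding the product decomposition $\A^e_G=\A^e_{G_1}\times\A^e_{G_2}$.

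For the converse in (1), assume $\A^e_G=\A_1\times\A_2$ nontrivially, giving the partition $E(G)=E_1\sqcup E_2$ above. I claim no simple cycle of $G$ can contain edges from both $E_1$ and $E_2$: if $C$ is a simple cycle, the only linear relation among $\{\alpha_e:e\in C\}$ (up to scalar) is the cycle relation $\sum_{e\in C}\epsilon_e\alpha_e=0$, since deleting any single edge leaves a tree whose edge-forms are independent. If $C$ had edges in both parts, rewriting this relation as $\sum_{e\in C\cap E_1}\epsilon_e\alpha_e=-\sum_{e\in C\cap E_2}\epsilon_e\alpha_e$ would exhibit a nonzero element of $U_1\cap U_2=0$, a contradiction. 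Now I invoke the standard block decomposition: two edges of $G$ belong to the same block iff they coincide or lie on a common simple cycle. Hence each block is contained entirely in $E_1$ or in $E_2$, so $G$ has at least two blocks and therefore possesses a cut vertex.

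Part (2) then follows by induction on the number of blocks using part (1). Pick a leaf block $G_k$ of the block-cut tree of $G$; its unique cut vertex $v$ separates $G_k$ from $G'$, where $G'$ is the union of the remaining blocks (with $v$ retained). By the forward direction of (1) applied to $v$, we get $\A^e_G=\A^e_{G_k}\times\A^e_{G'}$, and by induction $\A^e_{G'}=\A^e_{G_1}\times\cdots\times\A^e_{G_{k-1}}$. Each factor $\A^e_{G_i}$ is irreducible since $G_i$, being a block, has no cut vertex.

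The main obstacle is the converse in (1): transferring the algebraic splitting of $\A^e_G$ into the combinatorial statement that no simple cycle crosses the edge partition, and then matching that precisely to the block structure. The delicate point is the uniqueness (up to scalar) of the cycle relation among the edge forms; everything else reduces to linear algebra or standard facts about blocks and cut vertices.
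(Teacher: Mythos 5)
Your proof is correct, and it takes a genuinely different (more elementary and self-contained) route than the paper. The paper disposes of both parts in three sentences by invoking matroid theory: an essential arrangement is reducible iff its underlying matroid is disconnected, the cycle matroid of $G$ is connected iff $G$ is $2$-vertex-connected (Whitney), and the block decomposition of $G$ corresponds to the connected-component decomposition of its cycle matroid. You instead reprove the relevant matroid facts directly in linear-algebraic and graph-theoretic terms: the identification of a product splitting with a partition $E(G)=E_1\sqcup E_2$ having independent spans, the observation that a cut vertex yields such a partition in the coordinates $y_i=x_i-x_v$, and---the key step---the argument that the one-dimensionality of the relation space of a simple cycle forces every cycle to lie entirely in one part, so that the partition is a union of blocks. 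Your treatment of that last point (a proper subset of a cycle's edges is a forest, hence independent, so the partial sum $\sum_{e\in C\cap E_1}\epsilon_e\alpha_e$ is a nonzero element of $U_1\cap U_2$) is exactly the delicate point and is handled correctly, as is the induction on leaf blocks of the block-cut tree for part (2). What the paper's approach buys is brevity and a clean conceptual placement of the lemma inside standard matroid theory; what yours buys is a proof readable without any matroid background, at the cost of implicitly re-deriving one direction of Whitney's theorem.
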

\begin{proof}
(1): The arrangement $\A^e_G$ splits as a product iff the underlying cycle matroid of $G$ is not connected.  The cycle matroid of $G$ is connected iff the graph itself is $2$-vertex connected, i.e. it has no cut vertex.
(2): The decomposition $\A^e_G=\A^e_{G_1}\times\cdots\times \A^e_{G_k}$ follows from the decomposition of the cycle matroid of $G$ as a sum of connected components, which correspond to the blocks of $G$.
\end{proof}

\section{Derivations and Generalized Splines}\label{sec:DerivationComplex}

Our primary results in this section are Corollaries~\ref{cor:free1} and~\ref{cor:pdimUB}, giving homological criteria for freeness of $(\A_G,\m)$ and bounds on the projective dimension of $D(\A_G,\m)$, respectively.  We accomplish this by identifying $D(\A_G,\m)$ with a ring of generalized splines.

\begin{prop}\label{prop:DerSplines}
Let $(\A_G,\m)$ be a graphic multi-arrangement.  Let $\m'$ be the edge-labeling assigning the principal ideal $\m'(e)=\langle \alpha_e^{\m(e)} \rangle\subset S$ to each edge $e\in E(G)$.  Then $D(\A_G,\m)\cong S_{G,\m'}$.
\end{prop}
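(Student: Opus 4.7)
The plan is to write down a tautological $S$-module isomorphism between $D(\A_G,\m)$ and $S_{G,\m'}$ by using the standard coordinate identification of derivations with tuples of polynomials. Since $S=\kk[x_1,\ldots,x_v]$ (where $v=|V(G)|$), every $\kk$-derivation has a unique expression $\theta=\sum_{i=1}^{v} f_i\,\partial/\partial x_i$, giving a canonical $S$-module isomorphism $\mathrm{Der}_\kk(S)\cong \bigoplus_{v_i\in V(G)} S$ that sends $\theta$ to the tuple $(f_1,\ldots,f_v)$.

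Under this identification, the first step is to translate the defining condition of $D(\A_G,\m)$. For an edge $e=\{i,j\}$ with $i<j$, the form is $\alpha_e=x_i-x_j$, and one computes
\[
\theta(\alpha_e)=\theta(x_i)-\theta(x_j)=f_i-f_j.
\]
The multi-derivation condition $\theta(\alpha_e)\in\langle \alpha_e^{\m(e)}\rangle$ therefore becomes $f_i-f_j\in\langle (x_i-x_j)^{\m(e)}\rangle=\m'(e)$, which is precisely the congruence condition in Definition~\ref{defn:GeneralizedSplines} defining $S_{G,\m'}$.

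The second step is to package this into a map. I would define $\Phi:D(\A_G,\m)\to \bigoplus_{v\in V(G)} S$ by $\Phi(\theta)=(f_1,\ldots,f_v)$, observe from the computation above that the image lands in $S_{G,\m'}$, and write down the inverse explicitly: to any $(F_1,\ldots,F_v)\in S_{G,\m'}$ associate the derivation $\sum_i F_i\,\partial/\partial x_i$, which by the same computation satisfies the multi-derivation conditions. Both maps are $S$-linear and mutually inverse, giving the desired isomorphism.

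There is really no main obstacle here — the statement is a change of language rather than a substantive result. The only thing to be careful about is bookkeeping: making sure that the ordering convention $i<j$ on edges (used when writing $\alpha_e=x_i-x_j$) is consistent with the sign convention in the coboundary map from Lemma~\ref{lem:FirstCoho}, so that the kernel description of $S_{G,\m'}$ matches $\Phi(D(\A_G,\m))$ on the nose; but since both conditions are just ``$F_i-F_j\in\m'(e)$'' the signs are immaterial.
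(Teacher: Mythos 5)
Your proof is correct and follows essentially the same route as the paper: both arguments reduce the multi-derivation condition $\theta(\alpha_e)\in\langle\alpha_e^{\m(e)}\rangle$ to the congruence $\theta_i-\theta_j\in\langle(x_i-x_j)^{\m(e)}\rangle$ under the coordinate identification $\mathrm{Der}_\kk(S)\cong\bigoplus_{v\in V(G)}S$, and then match this with Definition~\ref{defn:GeneralizedSplines}. The only cosmetic difference is that the paper phrases the identification via the matrix $\psi$ of Lemma~\ref{lem:DerivationMatrix} (noting that the block $B$ is the transpose of the simplicial boundary map), whereas you compute $\theta(\alpha_e)=f_i-f_j$ directly; your closing remark that the sign conventions are immaterial is accurate.
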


\begin{proof}
In the case of the graphic arrangement $\A_G$, the rows of the matrix $\psi$ in Lemma~\ref{lem:DerivationMatrix} are labeled by edges of $G$.  Note that the submatrix $B$ is the transpose of the simplicial boundary map from edges to vertices of $G$.  Hence $\theta\in \bigoplus_{v\in V(G)} S$ is in $D(\A_G,\m)$ iff $\theta_i-\theta_j$ is a polynomial multiple of $\alpha_{ij}^{\m_{ij}}$ for every edge $e=\{i,j\}\in E(G)$.  By Definition~\ref{defn:GeneralizedSplines}, this is the same as the condition for $\theta$ to be a generalized spline on the edge-labeled graph $(G,\m')$.
\end{proof}

From now on we associate to the graphic mult-arrangement $(\A_G,\m)$ the chain complex $\cR/\cJ[G]$ associated to the edge-labeled graph $(G,\m')$, with $\m'$ as in Proposition~\ref{prop:DerSplines}.  By Proposition~\ref{prop:DerSplines} and Lemma~\ref{lem:firstCohomology}, $D(\A_G,\m)\cong H^0(\cR/\cJ[G])$.

In order to apply the results of \S~\ref{sec:Hyperhomology}, we analyze the codimension of the modules $H^i(\cR/\cJ[G])$.  The arguments closely follow those in~\cite{AssHom} for the Schenck-Stillman chain complex for classical splines on polyhedral complexes.

\begin{defn}
	Let $G$ be a graph on $v$ vertices and $H\subseteq G$ a subgraph.  Define $I(H)$ to be the ideal of $S=\kk[x_1,\ldots,x_v]$ generated by $\{x_i-x_j|\{i,j\}\in E(H)\}$.
\end{defn}

\begin{defn}
	Let $G$ be a graph and $I$ an ideal of $S=\kk[x_1,\ldots,x_v]$.  Define an equivalence relation $\sim_I$ on $G$ by $s\sim_I t$ iff $s$ is connected to $t$ by a path all of whose edges $\{i,j\}$ satisfy $x_i-x_j\in I$.  Let $G_I$ be the union of induced subgraphs of $G$ corresponding to equivalence classes of $\sim_I$.
\end{defn}

\begin{lem}\label{lem:localize}
	Let $(\A_G,\m)$ be a graphic multi-arrangement on a graph $G$ with $v$ vertices and set $S=\kk[x_1,\ldots,x_v]$.
	Let $P\subset S$ be a prime.  Then $H^i(\cR/\cJ[G])_P=H^i(\cR/\cJ[G_P])_P$ for $i>0$.
\end{lem}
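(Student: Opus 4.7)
The plan is to show that the complexes $(\cR/\cJ[G])_P$ and $(\cR/\cJ[G_P])_P$ are isomorphic as complexes of $S_P$-modules; since localization is exact and commutes with cohomology, this immediately yields $H^i(\cR/\cJ[G])_P\cong H^i(\cR/\cJ[G_P])_P$ for every $i$, in particular for $i>0$. The key preliminary step is an elementary equivalence: for any edge $e=\{i,j\}$ of $G$, one has $e\in E(G_P)$ if and only if $\alpha_e=x_i-x_j\in P$. The ``if'' direction is immediate from the definition of $\sim_P$, since then $i\sim_P j$ via the single edge $e$. For the ``only if'' direction, if $e\in E(G_P)$ then $i$ and $j$ lie in a common $\sim_P$-class, so there is a path in $G$ from $i$ to $j$ whose consecutive vertex differences all lie in $P$; telescoping that sum shows $\alpha_e\in P$.

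With this in hand, I would analyze which summands of $\cR/\cJ[G]$ survive localization at $P$. For $\sigma\in\D(G)_i$, the ideal $J(\sigma)$ is generated by the $\alpha_e^{m_e}$ for $e\in E(\sigma)$, so $(S/J(\sigma))_P\neq 0$ iff every $\alpha_e$ lies in $P$, which by the previous step is precisely the condition $E(\sigma)\subseteq E(G_P)$, i.e.\ that $\sigma$ is a clique of $G_P$. For such $\sigma$ the edge set $E(\sigma)$ is the same whether computed in $G$ or in $G_P$, so $J_G(\sigma)=J_{G_P}(\sigma)$, and the $\sigma$-summand of $(\cR/\cJ[G])_P$ agrees on the nose with the $\sigma$-summand of $(\cR/\cJ[G_P])_P$.

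Finally I would check that the differentials agree under this identification. The differential in $\cR/\cJ[G]$ sends the class of $\sigma\in\D(G_P)_i$ to $\sum_{\sigma\subset\tau}\pm\tau$ over $\tau\in\D(G)_{i+1}$; after localizing at $P$, any $\tau$ that fails to be a clique of $G_P$ contributes a zero summand by the previous paragraph, so only the $\tau\in\D(G_P)_{i+1}$ remain, with exactly the signs of the simplicial differential of $\cR/\cJ[G_P]$. This produces the desired isomorphism of complexes, and passage to cohomology proves the lemma. The argument is essentially bookkeeping; the one genuine content is the equivalence $e\in E(G_P)\Leftrightarrow\alpha_e\in P$, which uses only that $P$ is closed under addition, not that it is prime.
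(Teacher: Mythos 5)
Your proposal is correct and follows essentially the same route as the paper's proof: both show that the localized complexes $(\cR/\cJ[G])_P$ and $(\cR/\cJ[G_P])_P$ coincide termwise because $J(\sigma)\subseteq P$ exactly when $\sigma$ is a clique of $G_P$, and then invoke the fact that localization is exact and commutes with cohomology. You supply a bit more detail than the paper (the telescoping argument for $e\in E(G_P)\Leftrightarrow\alpha_e\in P$ and the explicit check that the differentials match), but the underlying argument is identical.
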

\begin{proof}
	We use the fact that localization commutes with taking cohomology so it suffices to show that the localized complexes $\cR/\cJ[G]_P$ and $\cR/\cJ[G_P]_P$ agree in cohomological degree $>0$.  If $\sigma\in\D(G)_i$, where $i>0$, then $J(\sigma)\subset P$ iff $x_i-x_j\in P$ for every edge $e\in\sigma$ iff $\sigma\in\D(G_P)_i$.  It follows that, for $i>0$,
	\[
	(\cR/\cJ[G]_i)_P=\bigoplus\limits_{\sigma\in\D(G)_i} \left(\dfrac{S}{J(\sigma)}\right)_P=\bigoplus\limits_{\sigma\in\D(G_P)_i} \left(\dfrac{S}{J(\sigma)}\right)_P=(\cR/\cJ[G_P]_i)_P,
	\]
	and the result follows.
\end{proof}

\begin{thm}\label{thm:assPrimes}
Let $(\A_G,\m)$ be a graphic multi-arrangement.  If $i>0$ then \\
$\mbox{codim}(H^i(\cR/\cJ[G]))\ge i+2.$
\end{thm}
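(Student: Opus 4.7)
The plan is to bound the height of any prime $P$ in the support of $H^i(\cR/\cJ[G])$ by localizing, decomposing into connected components, and arguing that any component which carries nonvanishing $H^i$ must be large enough to force the required codimension bound.

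\textbf{Localize and decompose.}  By Lemma~\ref{lem:localize}, $H^i(\cR/\cJ[G])_P \cong H^i(\cR/\cJ[G_P])_P$, so if $P\in \mathrm{Supp}(H^i(\cR/\cJ[G]))$ then $H^i(\cR/\cJ[G_P])\neq 0$.  Write $G_P$ as the disjoint union of its $\sim_P$-equivalence classes $C_1,\ldots,C_k$.  Since no clique of $G_P$ can span two distinct components, $\D(G_P)=\bigsqcup_j \D(C_j)$ and the chain complex splits as $\cR/\cJ[G_P]\cong \bigoplus_j \cR/\cJ[C_j]$.  Hence some connected component $C=C_\ell$ satisfies $H^i(\cR/\cJ[C])\neq 0$.

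\textbf{Vertex bound for the component.}  The key step is to show $|V(C)|\ge i+3$, equivalently that $H^i(\cR/\cJ[C])=0$ for every connected $C$ with $|V(C)|\le i+2$.  I would induct on $|V(C)|$.  The base cases $|V(C)|\le 3$ reduce to the finite list $K_1,K_2,P_3,K_3$, and in each of these $\D(C)$ has dimension at most two, with a direct verification that $\delta^0$ is surjective onto each edge quotient, killing all positive-degree cohomology.  For the inductive step, pick a vertex $w\in V(C)$ and use the Mayer--Vietoris short exact sequence coming from $\D(C)=\D(C\setminus w)\cup \mathrm{star}_{\D(C)}(w)$, with intersection $\mathrm{link}_{\D(C)}(w)=\D(G[N(w)])$.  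Because the coefficient system $\sigma\mapsto S/J(\sigma)$ restricts consistently to each piece, this yields a long exact sequence relating $H^i(\cR/\cJ[C])$ to the cohomology of $\cR/\cJ[C\setminus w]$, of the cone-like star complex, and of the link.  The induction hypothesis applied to the smaller graphs ($C\setminus w$ has $v-1$ vertices, while the star and link are supported on at most $v-1$ vertices) controls the flanking cohomology, and chasing the long exact sequence forces $H^i(\cR/\cJ[C])=0$ in the range $i\ge v-2$.

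\textbf{Height estimate.}  Every edge $\{a,b\}\in E(G_P)$ satisfies $x_a-x_b\in P$ by definition of $\sim_P$ (the linear form is a telescoping sum of generators of $P$ along the connecting path), so a spanning tree of the connected component $C$ produces $|V(C)|-1$ linearly independent forms contained in $P$.  Combining with the vertex bound gives $\mathrm{ht}(P)\ge |V(C)|-1\ge i+2$, which is the claimed codimension bound on $H^i(\cR/\cJ[G])$.

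The main obstacle is the inductive step in the vertex bound.  A generic connected graph does not admit a ``nice'' vertex $w$ (for instance, a simplicial vertex as one would have for chordal $C$), so one must choose $w$ with some care and verify that the Mayer--Vietoris pieces are tractable; moreover the coefficient modules $S/J(\sigma)$ depend sensitively on the multiplicity and the clique, so the long exact sequence argument is not a purely topological vanishing.  This is precisely the point at which the argument most closely mirrors the Schenck--Stillman polyhedral analysis and requires the most technical care.
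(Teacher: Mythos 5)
Your outer frame is sound and matches the paper's: localize at $P$ via Lemma~\ref{lem:localize}, reduce to the graph $G_P$, and convert a lower bound on the size (equivalently the rank) of $G_P$ into a height bound on $P$ via a spanning forest, since $I(G_P)\subset P$. The component decomposition $\cR/\cJ[G_P]\cong\bigoplus_j\cR/\cJ[C_j]$ is also fine. The genuine gap is that the entire content of the theorem lives in your ``vertex bound,'' and that step is not proved: you state an induction you have not carried out and you yourself flag its inductive step as an unresolved obstacle. Two concrete problems: (i) your base case reasoning is wrong for $K_3$ --- $\delta^0$ is \emph{not} surjective onto $\bigoplus_e S/J(e)$ there (take $f_3=0$; you cannot in general arrange $g_{13}-g_{23}\equiv g_{12}$ mod $J(12)$), and $H^1=\ker\delta^1/\mathrm{im}\,\delta^0$ vanishes for a different reason, namely contractibility of the $2$-simplex together with surjectivity of $\bigoplus_e J(e)\to J(\sigma)$; (ii) in the inductive step, if $w$ is a dominating vertex then $\mathrm{star}_{\D(C)}(w)=\D(C[N[w]])=\D(C)$ and the Mayer--Vietoris decomposition gives no reduction at all --- this happens precisely for complete graphs, which are exactly the graphs where the bound is sharp and where a separate algebraic argument is unavoidable.

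The paper closes this gap by a short structural classification rather than an induction. If $P$ supports $H^i$, then $\dim\D(G_P)\ge i$, so $G_P$ contains a clique on $i+1$ vertices contributing rank $i$; if in addition $\mathrm{rk}(\A_{G_P})\le i+1$, a spanning forest has at most one edge outside that clique, forcing $G_P$ (up to isolated vertices) to be a complete graph, a complete graph with one extra vertex attached, a complete graph plus a disjoint edge, or $K_{i+2}$. In every case $\D(G_P)$ is contractible (or a disjoint union of contractible pieces), so $H^i(\cR[G_P])=0$, and the long exact sequence of $0\to\cJ[G_P]\to\cR[G_P]\to\cR/\cJ[G_P]\to 0$ reduces the vanishing of $H^i(\cR/\cJ[G_P])$ either to a dimension count or to the surjectivity of $\bigoplus_{\tau}J(\tau)\to J(\sigma)$, which holds since $J(\sigma)=\sum_{e}\m'(e)$ by definition. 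If you want to salvage your induction, you would need to separate out the complete-graph case (handled by that surjectivity) and choose $w$ non-dominating otherwise, but as written the proposal does not constitute a proof.
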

\begin{proof}
Let $P\subset S$ be a prime in the support of $H^i(\cR/\cJ[G])$.  It suffices to show that $\mbox{rk}(\A_{G_P})\ge i+2$, since then $I(G_P)\subset P$ has codimension at least $i+2$.  Suppose that $\dim \D(G_P)\le i-1$.  Then $H^i(\cR[G_P])$ vanishes and $P$ cannot be in the support of $H^i(\cR/\cJ[G])$ by Lemma~\ref{lem:localize}.  In particular, since $\dim \D(G_P)\le \mbox{rk}(\A_{G_P})$, $\mbox{rk}(\A_{G_P})\ge i-1$.

Now suppose $\dim \D(G_P)=i$ and $\mbox{rk}(\A_{G_P})=i$ or $i+1$.  We claim that $\D(G_P)$ is contractible or a union of contractible simplicial complexes.  First, if $\mbox{rk}(\A_{G_P})=i$ then $G_P$ is a complete graph on $i+1$ vertices and $\D(G_P)$ is an $i$-simplex, clearly contractible.  If $\mbox{rk}(\A_{G_P})=i+1$, then $G_P$ has a complete subgraph $K$ on $(i+1)$ vertices.  There must be one other edge in a spanning forest for $G_P$.  Disregarding isolated vertices, if $G_P$ is not connected then it consists of $K$ along with an edge connecting two vertices.  If $G_P$ is connected, then it consists of $K$ along with one other vertex $v$ joined to some number of vertices of $K$.  In this latter case $\D(G_P)$ consists of two simplices (corresponding to $K$ and the neighbors of the vertex $v$) joined along a common face.  This is clearly contractible.

By the preceding paragraph, $H^i(\cR[G_P])=0$ for $i>0$ if $\dim \D(G_P)=i$ and $\mbox{rk}(\A_{G_P})=i$ or $i+1$.  The tail end of the long exact sequence associated to $0\rightarrow \cJ[G_P] \rightarrow \cR[G_P] \rightarrow \cR/\cJ[G_P] \rightarrow 0$ yields that $H^i(\cR/\cJ[G_P])=0$ as well, hence $P$ is not in the support of $H^i(\cR/\cJ[G])$ by Lemma~\ref{lem:localize}.

Now suppose $\dim\D(G_P)=i+1$ and $\mbox{rk}(\A_{G_P})=i+1$.  Then $G_P$ is a complete graph on $i+2$ vertices and $\D(G_P)$ is an $(i+1)$-simplex.  In this case $H^i(\cR[G_P])=H^{i+1}(\cR[G_P])=0$ and the long exact sequence corresponding to $0\rightarrow \cJ[G_P] \rightarrow \cR[G_P] \rightarrow \cR/\cJ[G_P] \rightarrow 0$ yields
\[
H^i(\cR/\cJ[G_P])\cong H^{i+1}(\cJ[G_P]).
\]
However, $H^{i+1}(\cJ[G_P])$ is the cokernel of
\[
\bigoplus\limits_{\tau\in \D(G_P)_i} J(\tau)\rightarrow J(\sigma),
\]
where $J(\sigma)=\langle l_e^{m_e}|e\in G_P \rangle$.  This map is surjective, hence $H^i(\cR/\cJ[G_P])= H^{i+1}(\cJ[G_P])=0$.  So $\mbox{rk}(\A_{G_P})\ge i+2$.
\end{proof}

\begin{cor}\label{cor:free1}
The graphic multi-arrangement $(\A_G,\m)$ is free iff $H^i(\cR/\cJ[G])$ $=0$ for all $i>0$.
\end{cor}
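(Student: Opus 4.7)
The plan is to deduce this immediately from Theorem~\ref{thm:SchenckFree} applied to the edge-labeled graph $(G,\m')$ of Proposition~\ref{prop:DerSplines}. By that proposition together with Lemma~\ref{lem:firstCohomology}, $D(\A_G,\m)\cong S_{G,\m'}\cong H^0(\cR/\cJ[G])$, so freeness of $D(\A_G,\m)$ as an $S$-module is equivalent to freeness of $S_{G,\m'}$. It therefore suffices to verify the two hypotheses of Theorem~\ref{thm:SchenckFree} for $(G,\m')$.

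The second hypothesis, that $H^i(\cR/\cJ[G])$ is supported in codimension $\ge i+2$ for all $i>0$, is precisely the content of Theorem~\ref{thm:assPrimes}. Only the Cohen--Macaulay hypothesis, that $S/J(\sigma)$ is Cohen--Macaulay of codimension $i$ for every $\sigma\in\D(G)_i$, requires a separate argument.

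For this Cohen--Macaulay check, I would observe that if $\sigma$ is a clique on the $i+1$ vertices $\{j_0,\ldots,j_i\}$, then $J(\sigma)$ is generated by powers of the linear forms $\alpha_{j_sj_t}=x_{j_s}-x_{j_t}$, all of which involve only the $i+1$ variables $x_{j_0},\ldots,x_{j_i}$. After the invertible change of variables $y_k:=x_{j_k}-x_{j_0}$ for $k=1,\ldots,i$, every generator of $J(\sigma)$ lies in $\kk[y_1,\ldots,y_i]$ (since $\alpha_{j_sj_t}=y_t-y_s$, with $y_0:=0$), and the common vanishing locus of those generators in $\kk^i$ is just the origin. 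Consequently $\kk[y_1,\ldots,y_i]/J(\sigma)$ is Artinian, and tensoring back with the polynomial ring in $x_{j_0}$ and in the variables not indexed by $\sigma$ exhibits $S/J(\sigma)$ as a polynomial extension of an Artinian ring. This gives Cohen--Macaulayness and verifies the codimension count.

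Once both hypotheses are in hand, Theorem~\ref{thm:SchenckFree} yields that $S_{G,\m'}$ is free iff $H^i(\cR/\cJ[G])=0$ for every $i>0$, which is the desired equivalence. I anticipate no serious obstacle: Theorem~\ref{thm:assPrimes} already absorbs all the combinatorial geometry needed for the support condition, and the Cohen--Macaulay step is a short ``braid-type'' reduction to an Artinian quotient.
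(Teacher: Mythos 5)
Your proposal is correct and follows essentially the same route as the paper: invoke Theorem~\ref{thm:SchenckFree}, use Theorem~\ref{thm:assPrimes} for the codimension-$(i+2)$ support condition, and verify that $S/J(\sigma)$ is Cohen--Macaulay of codimension $i$. The only difference is cosmetic: the paper justifies the Cohen--Macaulay step by noting that $J(\sigma)$ is primary to the complete intersection $I(\sigma)$, whereas you spell out the underlying reduction (a linear change of variables exhibiting $S/J(\sigma)$ as a polynomial extension of an Artinian quotient of $\kk[y_1,\ldots,y_i]$), which is if anything the more airtight version of the same argument.
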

\begin{proof}
We must show that the two bulleted hypotheses from Theorem~\ref{thm:SchenckFree} hold for the complex $\cR/\cJ[G]$.  First, let $\sigma\in\D(G)_i$ have vertices $v_0,\ldots,v_i$.  Then
\[
J(\sigma)=\langle (x_i-x_j)^{m_{ij}} |\{i,j\}\in\sigma \rangle
\]
is $I(\sigma)=\langle x_1-x_0,\ldots,x_i-x_0 \rangle$-primary.  Since $I(\sigma)$ is a complete intersection of codimension $i$, $S/J(\sigma)$ is Cohen-Macaulay of codimension $i$.  By Theorem~\ref{thm:assPrimes}, $H^i(\cR/\cJ[G])$ is supported in codimension $\ge i+2$ for all $i>0$, so we are done.
\end{proof}

\begin{cor}\label{cor:pdimUB}
Let $(\A_G,\m)$ be a graphic multi-arrangement.  For $i\ge 1$, set $p_i=\mbox{pdim}(H^i(\cR/\cJ[G]))$.  Then $\mbox{pdim}(D(\A_G,\m))\le \max\limits_{i\ge 1} \{p_i-i-1\}$, with equality if there is only a single nonvanishing $H^i(\cR/\cJ[G])$ for $i>0$.
\end{cor}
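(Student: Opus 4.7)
The plan is to deduce the corollary directly from Theorem~\ref{thm:pdimUB} applied to the edge-labeled graph $(G,\m')$, where $\m'(e)=\langle \alpha_e^{\m(e)}\rangle$ is the principal-ideal labeling from Proposition~\ref{prop:DerSplines}. Under this labeling, Proposition~\ref{prop:DerSplines} and Lemma~\ref{lem:firstCohomology} identify $D(\A_G,\m)\cong S_{G,\m'}\cong H^0(\cR/\cJ[G])$, so a bound on $\mbox{pdim}(S_{G,\m'})$ is exactly a bound on $\mbox{pdim}(D(\A_G,\m))$.

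The only hypothesis of Theorem~\ref{thm:pdimUB} that still requires verification is the Cohen-Macaulay condition: for each $\sigma\in\D(G)_i$, the quotient $S/J(\sigma)$ must be Cohen-Macaulay of codimension $i$. This is precisely the first bullet that was checked in the proof of Corollary~\ref{cor:free1}. I would repeat that observation: if $\sigma$ has vertices $v_{j_0},\ldots,v_{j_i}$, then
\[
J(\sigma)=\langle (x_{j_a}-x_{j_b})^{m_{j_aj_b}}\mid\{j_a,j_b\}\in\sigma\rangle
\]
is primary to the linear complete intersection $I(\sigma)=\langle x_{j_1}-x_{j_0},\ldots,x_{j_i}-x_{j_0}\rangle$ of codimension $i$, so $S/J(\sigma)$ is Cohen-Macaulay of codimension $i$.

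Once the Cohen-Macaulay hypothesis is in place, Theorem~\ref{thm:pdimUB} applies verbatim and yields
\[
\mbox{pdim}(D(\A_G,\m))=\mbox{pdim}(S_{G,\m'})\le \max_{i\ge 1}\{p_i-i-1\},
\]
with equality in the case of a single nonvanishing $H^i(\cR/\cJ[G])$ for $i>0$. Since the corollary is a direct specialization of an already-proven statement, there is no real obstacle: the only nontrivial ingredient is the primary-to-complete-intersection argument for $J(\sigma)$, and everything else is bookkeeping to translate the notation of Theorem~\ref{thm:pdimUB} into the multi-derivation setting via Proposition~\ref{prop:DerSplines}.
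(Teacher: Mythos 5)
Your proposal is correct and matches the paper's proof: the paper likewise verifies that each $S/J(\sigma)$ is Cohen--Macaulay of codimension $i$ (this is done in the proof of Corollary~\ref{cor:free1}) and then invokes Theorem~\ref{thm:pdimUB} together with the identification $D(\A_G,\m)\cong H^0(\cR/\cJ[G])$ from Proposition~\ref{prop:DerSplines} and Lemma~\ref{lem:firstCohomology}. No gaps.
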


\begin{proof}
Since we have already shown that $J(\sigma)$ is Cohen-Macaulay, this follows directly from Theorem~\ref{thm:pdimUB}.
\end{proof}

\section{Graphic Arrangements Admitting a Free Constant Multiplicity}\label{sec:FreeMult}

Let $\A_G$ be a graphic arrangement.  The main result of this section is that $\A_G$ admits a free constant multiplicity iff its blocks are the maximal complete subgraphs of $G$.  We start by analyzing cycles.

\begin{thm}\label{thm:Cycles}
Suppose $G$ is a cycle with $v>3$ vertices.  Then $\mbox{pdim}(D(\A_G,\m))$ $=v-3$ for any choice of $\m$.
\end{thm}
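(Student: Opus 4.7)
The plan is to apply Corollary~\ref{cor:pdimUB} after computing the single nonzero higher cohomology module $H^1(\cR/\cJ[G])$ explicitly. The key geometric input is that $v>3$ prevents $G$ from having triangles, so $\D(G)$ consists only of vertices and edges. Consequently $\cR/\cJ[G]$ is concentrated in cohomological degrees $0$ and $1$, and the Cohen--Macaulay hypothesis needed for Corollary~\ref{cor:pdimUB} is trivially satisfied (each $S/J(\sigma)$ is either $S$ itself or a hypersurface, just as in the proof of Corollary~\ref{cor:free1}).

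To compute $H^1(\cR/\cJ[G])$ I would chase the long exact sequence of the tautological short exact sequence $0\to\cJ[G]\to\cR[G]\to\cR/\cJ[G]\to 0$. Since $J(v_i)=0$ for every vertex, $\cJ[G]_0=0$ and the differential of $\cJ[G]$ vanishes, so $H^1(\cJ[G])=\bigoplus_{e\in E(G)}\langle\alpha_e^{m_e}\rangle$. The complex $\cR[G]$ is the simplicial cochain complex of a topological circle, so $H^1(\cR[G])\cong S$, with the isomorphism realized by the signed ``oriented sum around the cycle'' map $\pi:S^{E(G)}\to S$ whose kernel is exactly the image of $\delta^0$. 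The inclusion-induced map $H^1(\cJ[G])\to H^1(\cR[G])$ is then the restriction of $\pi$, whose image is the ideal
\[
I := \langle \alpha_{e_1}^{m_{e_1}}, \ldots, \alpha_{e_v}^{m_{e_v}}\rangle \subseteq S.
\]
Combining this with the surjection $H^1(\cR[G])\to H^1(\cR/\cJ[G])$ (from the long exact sequence, using $H^2(\cJ[G])=0$) identifies $H^1(\cR/\cJ[G])\cong S/I$.

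It remains to compute $\mbox{pdim}_S(S/I)$. I would perform a linear change of variables setting $y_i=x_i-x_{i+1}$ for $i=1,\ldots,v-1$ and picking one linear form $z$ complementary to the $y_i$, so that $S\cong\kk[y_1,\ldots,y_{v-1},z]$ and
\[
I = \langle y_1^{m_1},\ldots,y_{v-1}^{m_{v-1}},(y_1+\cdots+y_{v-1})^{m_v}\rangle
\]
involves only the $y_i$. Since $I$ contains $y_1^{m_1},\ldots,y_{v-1}^{m_{v-1}}$, the quotient $\kk[y_1,\ldots,y_{v-1}]/I$ is nonzero and Artinian, hence has depth $0$. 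By Auslander--Buchsbaum, $\mbox{pdim}_{\kk[y]}(\kk[y]/I)=v-1$, and tensoring with the free polynomial ring $\kk[z]$ preserves projective dimension, so $\mbox{pdim}_S(S/I)=v-1$.

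Finally, since $H^1(\cR/\cJ[G])$ is the only nonvanishing positive cohomology, Corollary~\ref{cor:pdimUB} gives
\[
\mbox{pdim}(D(\A_G,\m)) = \mbox{pdim}(H^1(\cR/\cJ[G])) - 1 - 1 = (v-1) - 2 = v-3.
\]
The main care point is the explicit identification of the connecting/inclusion map $H^1(\cJ[G])\to H^1(\cR[G])$, so that the relevant cokernel is unambiguously $S/I$ (with the ``correct'' ideal appearing); once this is pinned down, the projective dimension computation via Auslander--Buchsbaum is immediate and the answer is insensitive to $\m$.
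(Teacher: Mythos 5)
Your proposal is correct and follows essentially the same route as the paper: identify $H^1(\cR/\cJ[G])$ as the cokernel of $H^1(\cJ[G])\to H^1(\cR[G])\cong S$ via the long exact sequence (using that a cycle on $v>3$ vertices has no triangles, so this is the only nonvanishing higher cohomology), conclude $H^1(\cR/\cJ[G])\cong S/\sum_e J(e)$, show this quotient has projective dimension $v-1$, and invoke Corollary~\ref{cor:pdimUB}. The only cosmetic difference is that you compute $\mbox{pdim}(S/I)=v-1$ by an explicit change of variables, Artinian reduction, and Auslander--Buchsbaum, whereas the paper observes that $S/I$ is $I(G)$-primary and hence Cohen--Macaulay of codimension $v-1$; both justifications are valid.
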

\begin{proof}
In this case $\D(G)=G$, hence $H^1(\cJ[G])=\bigoplus_{\{i,j\}\in E(G)} J(ij)$, $H^1(\cR[G])$ $=H^1(G;S)=S,$ and $H^i=0$ for $i>1$ for all three chain complexes $\cJ[G],\cR[G],$ and $\cR/\cJ[G]$.  Via the long exact sequence arising from $0\rightarrow \cJ[G] \rightarrow \cR[G] \rightarrow \cR/\cJ[G] \rightarrow 0$ we have
\[
H^1(\cR/\cJ[G])=\mbox{coker}(H^1(\cJ[G])\rightarrow S)=\dfrac{S}{\sum_{\{ij\}\in E(G)} J(ij)}
\]
It follows that $H^1(\cR/\cJ[G])$ is $I(G)=\langle \alpha_e | e\in E(G)\rangle$-primary.  As such, it is Cohen-Macaulay with $\mbox{pdim}(H^1(\cR/\cJ[G]))=\mbox{codim}(H^1(\cR/\cJ[G]))=\mbox{codim}(I(G))=v-1$.  Since this is the only nonvanishing cohomology, it follows from Corollary~\ref{cor:pdimUB} that $\mbox{pdim}(D(\A_G,\m))=v-3$, the maximum possible by Corollary~\ref{cor:globalUB}.
\end{proof}

As a corollary to Theorem~\ref{thm:Cycles} and Corollary~\ref{cor:pdimLBGraph} we obtain a generalization of ~\cite[Corollary~2.4]{HalKung} to multi-arrangements.

\begin{cor}\label{cor:pdimLBCycle}
Let $(\A_G,\m)$ be a graphic multi-arrangement and let $m$ be the length of the longest induced cycle of $G$.  Then $\mbox{pdim}(D(\A_G,\m))\ge m-3$.
\end{cor}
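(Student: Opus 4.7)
The plan is to apply Corollary~\ref{cor:pdimLBGraph} to the partition of $G$ distinguished by an induced cycle of maximal length. If $m\le 3$, then $m-3\le 0$ and the inequality is vacuous since projective dimensions are nonnegative; so I will assume $m\ge 4$ and produce the bound.

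Let $C$ be an induced cycle of length $m$ in $G$. The key observation is that, since $C$ is induced, the partition $H$ of $G$ whose blocks are $V(C)$ (regarded as a single induced subgraph) together with every other vertex of $G$ as an isolated singleton is a genuine partition of $G$: the induced subgraph on $V(C)$ is exactly $C$. By Lemma~\ref{lem:GraphArrLattice}, $H$ corresponds to a flat of $L_{\A_G}$, so Corollary~\ref{cor:pdimLBGraph} gives
\[
\mbox{pdim}(D(\A_G,\m))\ge \mbox{pdim}(D(\A_H,\m_H)).
\]

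Now I would analyze the right-hand side using the product structure. By Remark~\ref{rem:Product} and Lemma~\ref{lem:DerSum}, since $H$ is the disjoint union of $C$ with a collection of isolated vertices,
\[
D(\A_H,\m_H)\cong D(\A_C,\m_C)\oplus F,
\]
where $F$ is the direct sum of the derivation modules of the trivial one-vertex arrangements, each of which is a free polynomial ring of derivations. Since $F$ is free, $\mbox{pdim}(D(\A_H,\m_H))=\mbox{pdim}(D(\A_C,\m_C))$. Finally, Theorem~\ref{thm:Cycles} (applied to the cycle $C$ with $m\ge 4$) gives $\mbox{pdim}(D(\A_C,\m_C))=m-3$, and chaining the two inequalities yields the desired bound.

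There is no real obstacle here; the proof is a formal assembly of Theorem~\ref{thm:Cycles} with the partition-flat correspondence from Lemma~\ref{lem:GraphArrLattice} and the product decomposition from Lemma~\ref{lem:DerSum}. The only mild subtlety worth double-checking in the write-up is that ``induced'' is essential: if $C$ were merely a subgraph cycle, the induced subgraph on $V(C)$ could acquire chords and thus fail to be a cycle, so Theorem~\ref{thm:Cycles} could not be invoked on it.
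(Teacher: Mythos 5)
Your proof is correct and follows exactly the route the paper intends: the paper states this corollary as an immediate consequence of Theorem~\ref{thm:Cycles} and Corollary~\ref{cor:pdimLBGraph}, leaving the assembly implicit, and your write-up simply makes that assembly explicit (including the correct observation that the cycle must be induced so that the corresponding partition restricts to an actual cycle).
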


Recall that a graph is \textit{chordal} if every cycle of length $>3$ has a chord, that is, an edge joining two vertices of the cycle.  Equivalently, every induced cycle has length three.

\begin{cor}\label{cor:Chordal}
Let $(\A_G,\m)$ be a graphic multi-arrangement.  If $D(\A_G,\m)$ is free, then $G$ is chordal.
\end{cor}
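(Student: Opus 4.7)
The plan is to argue by contrapositive, using Corollary~\ref{cor:pdimLBCycle} as a black box. Suppose $G$ is not chordal. By the characterization stated just before the corollary, this means $G$ contains an induced cycle of length $m \geq 4$; in particular the quantity $m$ appearing in Corollary~\ref{cor:pdimLBCycle} (the length of the longest induced cycle of $G$) satisfies $m \geq 4$.

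Applying Corollary~\ref{cor:pdimLBCycle} directly gives $\mbox{pdim}(D(\A_G,\m)) \geq m - 3 \geq 1$, so $D(\A_G,\m)$ cannot be free. This contradicts the hypothesis, so $G$ must be chordal.

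There is essentially no obstacle here: the substantive content is packed into the earlier results. Theorem~\ref{thm:Cycles} computes $\mbox{pdim}(D(\A_G,\m)) = v-3$ when $G$ is itself a cycle of length $v > 3$ by exploiting the fact that for a cycle the complex $\cR/\cJ[G]$ has cohomology concentrated in a single positive degree which is $I(G)$-primary, and Corollary~\ref{cor:pdimLBCycle} then transports this bound to any $G$ containing an induced cycle via the flat-subarrangement bound of Corollary~\ref{cor:pdimLBGraph} (an induced cycle corresponds to a partition of $G$, hence to a flat in $L_{\A_G}$). The only thing worth verifying is that the notion of ``induced cycle'' in the definition of chordality coincides with the notion used in Corollary~\ref{cor:pdimLBCycle}, which is immediate from Definition~\ref{defn:inducedSubgraph}.
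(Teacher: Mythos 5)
Your proof is correct and is exactly the argument the paper intends: the paper gives no explicit proof of Corollary~\ref{cor:Chordal}, treating it as an immediate consequence of Corollary~\ref{cor:pdimLBCycle}, which is precisely the contrapositive you spell out. Your supplementary remarks on where the real content lives (Theorem~\ref{thm:Cycles} plus the flat-restriction bound) also match the paper's chain of reasoning.
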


\begin{remark}
It follows from Stanley~\cite{StanleySupersolvable} that a graphic arrangement $\A_G$ is free iff $G$ is chordal.  Hence Corollary~\ref{cor:Chordal} could be stated as $(\A_G,\m)$ is free $\implies$ $\A_G$ is free.  This is not true for general hyperplane arrangements; it was conjectured to be true by Ziegler~\cite[Conjecture~13]{ZieglerMulti} but a counterexample was produced in~\cite{ReinerCounterEx}.
\end{remark}

\begin{remark}\label{rem:Yosh}
Yoshinaga has shown~\cite[Proposition~4.1]{YoshExtendability} that generic arrangements are totally non-free.  Since cycles of length $>3$ yield generic graphic arrangements, Corollary~\ref{cor:Chordal} also follows from this more general result.
\end{remark}

To characterize graphs admitting a free constant multiplicity we use a description of chordal graphs due to Herzog-Hibi-Zheng.

\begin{defn}\label{defn:quasiForest}
Let $\D$ be a simplicial complex.  A facet $F$ of $\D$ is a \textit{leaf} if there is a facet $H$ of $\D$ so that, for any facet $G\neq F$ of $\D$, $G\cap F\subset H\cap F$.  A \textit{leaf ordering} of $\D$ is an ordering $\{F_1,\ldots,F_k\}$ of the facets of $\D$ so that, for $i=1,\ldots,k$,  $F_i$ is a \textit{leaf} of the subcomplex of $\D$ generated by $\{F_1,\ldots,F_i,F_i\}$.  If $\D$ has a leaf ordering, then it is called a \textit{quasi-forest}.
\end{defn}

\begin{thm}[Herzog-Hibi-Zheng~\cite{Herzog}]\label{thm:chordalEquiv}
A graph $G$ is chordal iff $\D(G)$ is a quasi-forest.
\end{thm}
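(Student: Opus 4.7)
The plan is to prove the two directions separately: structural induction for the forward direction and the exhibition of an obstructing subcomplex for the converse.

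For the forward direction, I would induct on $|V(G)|$. Since $G$ is chordal, Dirac's theorem provides a simplicial vertex $v$, meaning that $N(v)$ is a clique, so $F = \{v\} \cup N(v)$ is the unique maximal clique of $G$ containing $v$. Set $G' = G \setminus \{v\}$; induced subgraphs of chordal graphs are chordal, so by induction $\D(G')$ admits a leaf ordering $F'_1, \ldots, F'_{k-1}$. I would then compare the facet sets of $\D(G)$ and $\D(G')$: $F$ is a new maximal clique in $\D(G)$, and the facets of $\D(G')$ persist as facets of $\D(G)$ unless they equal $N(v)$ (in which case $F$ properly contains them). Two cases therefore arise. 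If $N(v) = F'_j$ for some $j$, I substitute $F$ for $F'_j$ in the leaf ordering; this preserves every leaf condition because no facet other than $F$ contains $v$, so all intersections $X \cap F$ equal $X \cap N(v) = X \cap F'_j$. Otherwise $N(v) \subsetneq F'_j$ for some $j$, and I append $F$ to the end of the leaf ordering with $F'_j$ as witness, since $F \cap F'_i \subseteq N(v) \subseteq F'_j \cap F$ for every $i$.

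For the backward direction, suppose $G$ contains an induced cycle $v_1, v_2, \ldots, v_m, v_1$ with $m \geq 4$, writing $V(C)$ for its vertex set. For each cyclic edge $\{v_i, v_{i+1}\}$ choose a maximal clique $F_i$ of $G$ containing it; since $C$ is induced, $F_i$ contains no third cycle vertex (a chord would result), so $F_i \cap V(C) = \{v_i, v_{i+1}\}$ and the facets $F_1, \ldots, F_m$ are pairwise distinct. The subcomplex $\D'$ of $\D(G)$ generated by these facets has no leaf: if $F_i$ were a leaf of $\D'$ with witness $H = F_j$ for some $j \neq i$, then $H$ would have to contain both $v_i \in F_{i-1} \cap F_i$ and $v_{i+1} \in F_{i+1} \cap F_i$, forcing $\{v_i, v_{i+1}\} \subseteq F_j \cap V(C) = \{v_j, v_{j+1}\}$ and hence $j = i$, a contradiction. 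Appealing to the hereditary property that every subcomplex of a quasi-forest generated by a subset of its facets again has a leaf (a short induction on the number of facets via leaf removal), $\D(G)$ cannot be a quasi-forest.

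The principal technicality is the case analysis in the forward direction: one must carefully distinguish whether $N(v)$ is itself a facet of $\D(G')$ or is strictly dominated by one, and verify in each case that substituting or appending $F$ preserves every leaf condition along the ordering. A secondary point is the hereditary lemma invoked in the backward direction; while often considered part of the definition or a direct consequence of it, it would require a brief separate justification if the bare leaf-ordering definition is taken as primary.
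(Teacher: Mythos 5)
The paper offers no proof of this theorem (it is cited from Herzog--Hibi--Zheng), so your argument must stand on its own. Your forward direction is fine: it is the standard induction using Dirac's theorem on simplicial vertices, and your case analysis on whether $N(v)$ equals a facet of $\D(G\setminus\{v\})$ or is properly contained in one is handled correctly.

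The backward direction, however, rests on a false lemma. It is \emph{not} true that every subcomplex of a quasi-forest generated by a subset of its facets has a leaf. For a counterexample, take the graph on vertices $1,\dots,6$ with edges $12,13,23,14,24,25,35,16,36$; its clique complex has facets $F_0=\{1,2,3\}$, $F_1=\{1,2,4\}$, $F_2=\{2,3,5\}$, $F_3=\{1,3,6\}$, and the ordering $F_0,F_1,F_2,F_3$ is a leaf ordering (with $F_0$ as witness at every stage), so this is a quasi-forest. Yet the subcomplex generated by $F_1,F_2,F_3$ has no leaf: the pairwise intersections are the distinct singletons $\{2\}$, $\{1\}$, $\{3\}$, so a witness for $F_1$ would have to contain both $1$ and $2$, and neither $F_2$ nor $F_3$ does (symmetrically for the others). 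Your proposed ``short induction via leaf removal'' breaks at exactly this point: after deleting the last leaf $F_k$ of the ambient ordering, its branch $H$ need not belong to the chosen subset of facets, and then no witness need survive. (The hereditary property you want is Faridi's notion of a simplicial \emph{forest}, which is strictly stronger than being a quasi-forest.) Consequently, exhibiting a leafless subcollection of facets does not show that $\D(G)$ fails to be a quasi-forest, and the converse direction is unproved as written. The standard repair is different: if $F_1,\dots,F_k$ is a leaf ordering of $\D(G)$ with $k\ge 2$ and $H$ is a branch of the leaf $F_k$, then any vertex $v\in F_k\setminus H$ lies in no other facet and hence is a simplicial vertex of $G$; a simplicial vertex cannot lie on an induced cycle of length $\ge 4$, one checks that $\D(G\setminus\{v\})$ inherits a leaf ordering, and induction on $|V(G)|$ finishes the argument.
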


\begin{thm}\label{thm:ProductsCompleteGraphs}
Let $G$ be a graph and $m\ge 2$.  The following are equivalent.
\begin{enumerate}
\item $\A^{(m)}_G$ is free.
\item $\D(G)$ has a leaf ordering $\{F_1,\ldots,F_k\}$ so that $F_{i+1}\cap \{F_1,\ldots,F_i\}$ consists of at most a single vertex for $i=1,\ldots,k-1$.
\item The blocks of $G$ are the maximal complete subgraphs of $G$.
\item $\A^e_G$ is the product of its subarrangements 
\[
\{\A^e_H|H\subset G\mbox{ a maximal complete subgraph}\}.
\]
\end{enumerate}
\end{thm}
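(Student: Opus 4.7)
The plan is to establish the four implications $(3)\Leftrightarrow(4)$, $(2)\Leftrightarrow(3)$, $(3)\Rightarrow(1)$, and $(1)\Rightarrow(3)$, with the last being the main content. The three easy equivalences rest on material already in the paper. For $(3)\Leftrightarrow(4)$, Lemma~\ref{lem:Product}(2) identifies the irreducible factors of $\A^e_G$ with the essentializations of the blocks of $G$, so $(3)$ and $(4)$ say the same thing. For $(2)\Leftrightarrow(3)$, I would first observe that both conditions force $G$ chordal (condition $(2)$ via Theorem~\ref{thm:chordalEquiv}; condition $(3)$ because every cycle of $G$ lies inside a single block, hence inside a clique, ruling out induced cycles of length $\ge 4$); the facets of $\D(G)$ are then the maximal cliques, and the constraint in $(2)$ that each new facet meets the earlier union in at most one vertex is precisely the block-cut-tree structure asserted by $(3)$, the equivalence in either direction being furnished by a depth-first traversal of the block-cut tree. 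For $(3)\Rightarrow(1)$, if the blocks $B_1,\ldots,B_s$ of $G$ are complete graphs, Lemma~\ref{lem:Product}(2) gives $\A^e_G=\A^e_{B_1}\times\cdots\times\A^e_{B_s}$ as a product of braid arrangements, Lemma~\ref{lem:DerSum} decomposes $D((\A^e_G)^{(m)})$ accordingly into $\bigoplus_i D^{(m)}(A_{|B_i|-1})$, and each summand is free for constant multiplicity by Terao's classical result~\cite{TeraoMultiDer}.

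The main direction is $(1)\Rightarrow(3)$. I would assume $\A_G^{(m)}$ is free with $m\ge 2$ (so $G$ is chordal by Corollary~\ref{cor:Chordal}) and argue by contradiction: if some block of $G$ is not a maximal clique, then two maximal cliques of $G$ share at least two vertices. Choose such a pair $C_1,C_2$ with $|C_1\cap C_2|$ maximum, and pick $u,v\in C_1\cap C_2$, $a\in C_1\setminus C_2$, $b\in C_2\setminus C_1$. If $\{a,b\}$ were an edge of $G$, then $(C_1\cap C_2)\cup\{a,b\}$ would be a clique contained in some maximal clique $C_3$ with $|C_3\cap C_1|>|C_1\cap C_2|$, contradicting the choice of $C_1,C_2$. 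Hence $\{a,b\}\notin E(G)$, and the induced subgraph on $\{u,v,a,b\}$ is $K_4-e$. Applying Corollary~\ref{cor:pdimLBGraph} to the partition whose single non-trivial class is $\{u,v,a,b\}$ (and every other vertex a singleton) yields $\mbox{pdim}(D(\A_G^{(m)}))\ge\mbox{pdim}(D(\A_{K_4-e}^{(m)}))$, reducing the problem to showing $\A_{K_4-e}^{(m)}$ is not free for $m\ge 2$.

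I expect the main obstacle to be this last step. Label $K_4-e$ on vertices $\{1,2,3,4\}$ with the edge $\{3,4\}$ missing. Since $\D(K_4-e)$ is two triangles glued along $\{1,2\}$, it is contractible, so $H^i(\cR[K_4-e])=0$ for $i>0$, and the long exact sequence from $0\to\cJ[K_4-e]\to\cR[K_4-e]\to\cR/\cJ[K_4-e]\to 0$ collapses to $H^1(\cR/\cJ[K_4-e])\cong H^2(\cJ[K_4-e])$. The latter is the cokernel of $\delta^1:\bigoplus_e\langle\alpha_e^m\rangle\to J(\{1,2,3\})\oplus J(\{1,2,4\})$, whose image is generated (up to sign) by $(\alpha_{12}^m,\alpha_{12}^m)$ from the edge $\{1,2\}$, by $(\alpha_{13}^m,0)$ and $(\alpha_{23}^m,0)$ from the edges meeting vertex $3$, and by $(0,\alpha_{14}^m)$ and $(0,\alpha_{24}^m)$ from the edges meeting vertex $4$. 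Subtracting the two coordinate equations shows that $(\alpha_{12}^m,0)\in\mbox{im}(\delta^1)$ implies $\alpha_{12}^m\in\langle\alpha_{13}^m,\alpha_{23}^m,\alpha_{14}^m,\alpha_{24}^m\rangle$; specializing $x_3\mapsto 0$, $x_4\mapsto 0$ reduces this to $(x_1-x_2)^m\in\langle x_1^m,x_2^m\rangle\subset\kk[x_1,x_2]$, which fails for $m\ge 2$ because the middle terms $\binom{m}{k}x_1^{m-k}x_2^k$, $1\le k\le m-1$, in the binomial expansion of $(x_1-x_2)^m$ are nonzero modulo $\langle x_1^m,x_2^m\rangle$ in characteristic zero. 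Therefore $H^1(\cR/\cJ[K_4-e])\ne 0$, so Corollary~\ref{cor:free1} forces $\A_{K_4-e}^{(m)}$ to be non-free, producing the desired contradiction.
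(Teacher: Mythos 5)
Your proposal is correct and follows the same overall skeleton as the paper's proof: locate an induced $K_4$ minus an edge whenever some block fails to be a maximal clique, pass to it via Corollary~\ref{cor:pdimLBGraph}, and close the cycle of implications through the block decomposition of Lemma~\ref{lem:Product}, Terao's freeness of constant multiplicities on Coxeter (here braid) arrangements, and Lemma~\ref{lem:DerSum}. The genuinely different ingredient is your treatment of $K_4-e$, the deleted $A_3$ arrangement: the paper disposes of it by citing Abe's classification of its free multiplicities~\cite{AbeDeletedA3}, whereas you prove non-freeness of the constant multiplicity $m\ge 2$ directly, identifying $H^1(\cR/\cJ[K_4-e])\cong H^2(\cJ[K_4-e])$ with the cokernel of $\bigoplus_e\langle\alpha_e^m\rangle\to J(123)\oplus J(124)$ and showing it is nonzero because $(x_1-x_2)^m\notin\langle x_1^m,x_2^m\rangle$ in characteristic zero; Corollary~\ref{cor:free1} then yields non-freeness. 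That computation checks out and buys a self-contained argument at the cost of covering only constant multiplicities (Abe's result is stronger than what is needed here). Your route to locating the $K_4-e$ also differs cosmetically: you find it via two maximal cliques sharing at least two vertices rather than via the leaf ordering of $\D(G)$. The one step that deserves an explicit sentence is the claim that a non-clique block forces two maximal cliques to meet in at least two vertices; this is false for non-chordal graphs (e.g.\ the $4$-cycle) and genuinely uses chordality (minimal vertex separators of chordal graphs are cliques, and each side of a minimal separator contains a vertex adjacent to all of it), but since you have already invoked Corollary~\ref{cor:Chordal} at that point the claim is sound.
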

\begin{proof}
We prove the equivalence of these conditions under the assumption that $G$ is connected.  The full result then follows from Remark~\ref{rem:Product}.

$(1)\implies (2)$:
Suppose $D^{(m)}(\A_G)$ is free.  By Corollary~\ref{cor:Chordal}, $G$ is chordal and hence $\D(G)$ has a leaf ordering $\{F_1,\ldots,F_k\}$ by Corollary~\ref{thm:chordalEquiv}.  Suppose $F_{i+1}\cap \Delta_i=\{F_1,\ldots,F_i\}$ contains an edge $\{s,t\}$ of $F_{i+1}$.  Since $F_{i+1}$ is a leaf of $\Delta_{i+1}$, there is a facet $H\subset \Delta_i$ so that $H\cap F_{i+1}=\Delta_i\cap F_{i+1}$.  Pick a vertex $v\in H\setminus F_{i+1}$ and $w\in F_{i+1}\setminus H$.  Then the subgraph $K$ of $G$ induced by the four vertices $v,w,s,t$ is isomorphic to the complete graph on four vertices with an edge removed.  This is the so-called \textit{deleted} $A_3$ \textit{arrangement} and a characterization of its free multiplicities due to Abe~\cite{AbeDeletedA3} shows that it does not admit any free constant multiplicity $>1$.  It follows from Corollary~\ref{cor:pdimLBGraph} that $G$ can have no such induced subgraph.  Hence $F_{i+1}\cap\{F_1,\ldots,F_i\}$ consists of at most a single vertex.

$(2)\implies (3)$:
The assumption in (2) implies that the blocks of $G$ (maximal subgraphs without a cut vertex) correspond to the facets of $\D(G)$, which are exactly the maximal complete subgraphs of $G$.

$(3)\implies (4)$:  This follows from Lemma~\ref{lem:Product}.

$(4)\implies (1)$: A seminal result of Terao~\cite{TeraoMultiDer} states that $\A^{(m)}$ is free if $\A$ is a Coxeter arrangement.  The graphic arrangement of a complete graph on $n$ vertices is the braid arrangement $A_{n-1}$, which is Coxeter.  It follows that each of the factors in the product of (3) are free for any constant multiplicity.  Now the result follows from Lemma~\ref{lem:DerSum}.
\end{proof}

\begin{ques}\label{ques:4}
Is there an extension of Theorem~\ref{thm:ProductsCompleteGraphs} to subarrangements of other Coxeter arrangements?
\end{ques}

\begin{cor}\label{cor:totallyfree}
	Let $G$ be a graph.  Then the following are equivalent.
	\begin{enumerate}
		\item $(\A_G,\m)$ is free for all $\m$
		\item $\dim\D(G)\le 2$ and $\D(G)$ has a leaf ordering $\{F_1,\ldots,F_k\}$ so that $F_{i+1}\cap \{F_1,\ldots,F_i\}$ consists of at most a single vertex for $i=1,\ldots,k-1$.
		\item The blocks of $G$ are complete subgraphs on at most three vertices.
		\item $\A^e_G$ is the product of its subarrangements 
		\[
		\{\A^e_H|H\subset G\mbox{ a complete subgraph with at most three vertices}\}.
		\]
	\end{enumerate}
\end{cor}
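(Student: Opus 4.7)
This mirrors the structure of Theorem~\ref{thm:ProductsCompleteGraphs}, adapted to the strictly stronger hypothesis that $(\A_G,\m)$ is free for \emph{every} multiplicity $\m$. As there, I would first reduce to $G$ connected by Remark~\ref{rem:Product}, and then establish the cycle $(3)\Leftrightarrow(4)\Rightarrow(1)\Rightarrow(3)$ together with $(2)\Leftrightarrow(3)$.

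The equivalences $(3)\Leftrightarrow(4)$ and $(2)\Leftrightarrow(3)$ should be essentially combinatorial reductions to their analogues in Theorem~\ref{thm:ProductsCompleteGraphs}. For $(3)\Leftrightarrow(4)$, Lemma~\ref{lem:Product} supplies the product decomposition of $\A^e_G$ indexed by blocks, which under (3) are complete on at most three vertices (so each factor is $K_1$, $K_2$, or $K_3$). For $(2)\Leftrightarrow(3)$, I would reuse the leaf-ordering translation from Theorem~\ref{thm:ProductsCompleteGraphs}$(2)\Leftrightarrow(3)$: the intersection condition on consecutive facets forces facets of $\D(G)$ to coincide with blocks, and the additional requirement $\dim\D(G)\le 2$ amounts to restricting these facets (hence blocks) to have at most three vertices.

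For $(4)\Rightarrow(1)$, each factor $\A^e_H$ in the decomposition satisfies $\mbox{rk}(\A_H)=|V(H)|-1\le 2$. Lemma~\ref{lem:globalUB} then guarantees that $(\A_H,\m|_H)$ is free for \emph{every} restricted multiplicity, and Lemma~\ref{lem:DerSum} assembles these pieces into a free $D(\A_G,\m)$. This is the step where the strictly stronger conclusion of (1) is forced: we need rank at most $2$ on each irreducible factor so that Lemma~\ref{lem:globalUB} applies regardless of $\m$, rather than merely for constant multiplicity $\ge 2$.

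The direction $(1)\Rightarrow(3)$ is the crux. Since (1) implies in particular that $\A^{(2)}_G$ is free, Theorem~\ref{thm:ProductsCompleteGraphs} already gives that the blocks of $G$ are its maximal complete subgraphs. I would argue by contradiction: if some block were $K_n$ with $n\ge 4$, then $K_4$ would appear as an induced subgraph of $G$, and by Corollary~\ref{cor:pdimLBGraph}, any non-free multiplicity $\m_0$ on $K_4$ would extend (arbitrarily on the remaining edges) to a non-free multiplicity on $G$, contradicting (1). The hard part is therefore exhibiting a non-free multiplicity on $K_4$, i.e., on the braid arrangement $A_3$; this is known from the classification of free multiplicities on braid arrangements (e.g., via Abe-Nuida's signed-eliminable characterization), and can alternatively be established within the framework of this paper by computing $H^1(\cR/\cJ[K_4])$ for a suitable non-constant multiplicity and invoking Corollary~\ref{cor:free1}.
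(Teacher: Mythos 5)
Your proposal is correct and takes essentially the same approach as the paper: apply Theorem~\ref{thm:ProductsCompleteGraphs} with $m=2$ to obtain the block/leaf-ordering structure, and get the restriction to at most three vertices per block by pulling back a known non-free multiplicity on the braid arrangement $A_3$ (the paper cites~\cite[Example~5.13]{EulerMult}) via Corollary~\ref{cor:pdimLBGraph}. Your observation that $(4)\Rightarrow(1)$ must now rest on Lemma~\ref{lem:globalUB} (rank $\le 2$ implies free for \emph{every} multiplicity) rather than Terao's constant-multiplicity theorem is a correct reading of what the paper's terse ``proceed as in the proof of Theorem~\ref{thm:ProductsCompleteGraphs}'' has to mean in this setting.
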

\begin{proof}
	For $(2)\implies(3) \implies (4)\implies (1)$ we proceed as in the proof of Theorem~\ref{thm:ProductsCompleteGraphs}.  For $(1)\implies (2)$, we already know that $(\A_G,\m)$ free for all $\m$ implies $\D(G)$ has the required leaf ordering, just not the dimension restriction.  For this it suffices to produce a nonfree multiplicity on the $A_3$ braid arrangement (complete graph on four vertices).  An example of such a multiplicity may be found in~\cite[Example~5.13]{EulerMult}.
\end{proof}

\begin{remark}
Corollary~\ref{cor:totallyfree} is a special case of a result of Abe-Terao-Yoshinaga ~\cite{TeraoTotallyFree} that a multi-arrangement $(\A,\m)$ is free for all $\m$ iff $\A$ is the product of one and two dimensional arrangements.
\end{remark}

\section{Examples}\label{sec:Examples}

In~\cite[Problem~23]{SchenckComputationsConjectures}, Schenck asks for a formula for the projective dimension of $D(\A_G)$ for graphic arrangements.  In~\cite{HalKung}, Kung and Schenck ask for a characterization of graphs for which the bound $\mbox{pdim}(D(\A_G))\le m-3$, where $m$ is the length of a longest induced cycle, is tight.  We call this the \textit{Kung-Schenck bound}.  In this section we explore these two problems three examples.

\begin{exm}[Wheel]\label{exm:Wheel}
	Let $W_n$ be the graph with $n$ vertices on an exterior cycle all joined to a central vertex.  Figure~\ref{fig:Wheel} shows $W_4$.
	\begin{figure}[htp]
		\centering
		
		\begin{tikzpicture}[scale=1]
		\tikzstyle{dot}=[circle,fill=black,inner sep=1 pt];
		
		\node[dot] (1) at (0,0) {};
		\node[dot] (2) at (1,0){};
		\node[dot] (3) at (0,1){};
		\node[dot] (4) at (-1,0){};
		\node[dot] (5) at (0,-1){};
		
		\draw (1)node[above right]{$0$}--(2)
		(1)--(3)
		(1)--(4)
		(1)--(5);
		\draw (2)node[right]{$1$}--(3) node[above] {$2$}--(4)node[left]{$3$} -- (5) node[below]{$4$}--(2);
		\end{tikzpicture}
		\caption{$W_4$}\label{fig:Wheel}
	\end{figure}
	
	For the simple arrangement $\A_{W_n}$, we show that the Kung-Schenck bound is tight.  Since $\D(W_n)$ is contractible, $H^i(\cR[W_n])=0$ for $i=1,2$, $H^2(\cR/\cJ[W_n])$ $=0$, and $H^1(\cR/\cJ[W_n])\cong H^2(\cJ[W_n])$.  The complex $\cJ[W_n]$ has the form
	\[
	0\rightarrow \bigoplus_{e\in E(W_n)} J(e) \rightarrow \bigoplus_{\sigma\in\D(W_n)_2} J(\sigma)
	\]
	Arguing as in~\cite[Lemma~3.8]{LCoho}, there is a presentation for $H^2(\cJ[G])$ given by
	\[
	\sum_{\sigma\in \D(W_n)_2} \mbox{syz}(J(\sigma)) \xrightarrow{i} \bigoplus_{e\in E(W_n)} S(-1),
	\]
	where $\mbox{syz}(J(\sigma))$ denotes the module of syzygies on $J(\sigma)$ and $\mbox{coker}(i)=H^2(\cJ[W_n])$.  In this case $\mbox{syz}(J(\sigma))$ is generated by a relation in degree zero and a choice of Koszul syzygy in degree one.  All generators corresponding to boundary edges of $W_n$ are killed in the cokernel and generators corresponding to interior edges are equivalent.  Hence $H^1(\cJ[G])$ is principal, generated in degree one, and is killed by any form $\alpha_e$ where $e$ is a boundary edge.  More precisely,
	\[
	H^1(\cR/\cJ[G])\cong H^2(\cJ[G])\cong S(-1)/\langle \alpha_e| e\in \partial\D(W_n) \rangle S(-1)
	\]
	Hence $H^1(\cJ[G])$ has projective dimension $n-1$ and $D(W_n)$ has projective dimension $n-3$ by Corollary~\ref{cor:pdimUB}.
\end{exm}

\begin{exm}[Fritsch Graph]\label{exm:Fritsch}
	We look for graphs that fail the Kung-Schenck bound for topological reasons.  A natural place to look is graphs $G$ on $v$ vertices for which $\D(G)$ is a simplicial sphere.  In this case the long exact sequence from $0\rightarrow \cJ[G] \rightarrow \cR[G] \rightarrow \cR/\cJ[G] \rightarrow 0$ yields $H^2(\cR/\cJ[G])\cong S/I(G)$, so $\mbox{pdim}(H^2(\cR/\cJ))=v-1$.  In such a case it could happen that the differential on the $E_3$ page of the Cartan-Eilenberg spectral sequence yields an isomorphism between $Ext^{v-1}_S(H^2(\cR/\cJ[G]),S)$ and $Ext^{v-4}_S(D(\A_G),S)$, or at least a nontrivial map between subquotients of these.
	\begin{figure}[htp]
		\centering
		\begin{tikzpicture}[scale=1]
		\tikzstyle{dot}=[circle,fill=black,inner sep=1 pt];
		
		\node[dot] (1) at (0,0) {};
		\node[dot] (2) at (1,0){};
		\node[dot] (3) at (1,-1){};
		\node[dot] (4) at (-1,-1){};
		\node[dot] (5) at (-1,0){};
		\node[dot] (6) at (0,-2){};
		\node[dot] (7) at (0,3){};
		\node[dot] (8) at (-3,-3){};
		\node[dot] (9) at (3,-3){};
		
		\draw (1)--(2)--(3)--(1);
		\draw (1)--(4)--(3);
		\draw (1)--(5)--(4);
		\draw (1)--(7)--(5);
		\draw (2)--(7)--(9)--(2);
		\draw (3)--(6)--(9)--(3);
		\draw (6)--(8)--(9);
		\draw (6)--(4)--(8);
		\draw (5)--(8);
		\draw (7)--(8);
		\end{tikzpicture}
		\caption{$G$}\label{fig:Fritsch}
	\end{figure}
	Consider the graph $G$ in Figure~\ref{fig:Fritsch}.  This graph has $9$ vertices, so $\mbox{pdim}(H^2(\cR/\cJ[G]))=8$.  If a nontrivial differential on the $E_3$ page exists, then we expect $D(\A_G)$ to have projective dimension $5$.  Instead, computations in Macaulay2 show that $\mbox{pdim}(\A_G)=2$, which is precisely the Kung-Schenck bound since the longest length of an induced cycle is five.  According to Macaulay2, there is an isomorphism between $Ext^6_S(H^1(\cR/\cJ[G]),S)$ and $Ext^8_S(H^2(\cR/\cJ[G]),S)$; this isomorphism on the $E_2$ page kills the possible contribution to $\mbox{pdim}(D(\A_G))$ on the $E_3$ page.  This also gives an example where the upper bound from Corollary~\ref{cor:pdimUB}, $\mbox{pdim}(D(\A_G))\le 5$, is far from tight.
\end{exm}

\begin{ques}\label{ques:1}
	Building on Example~\ref{exm:Wheel} and Example~\ref{exm:Fritsch}, is the Kung-Schenck bound tight for all graphs whose clique complex is a spherical or planar triangulation?
\end{ques}

\begin{ques}\label{ques:2}
	Is $\mbox{pdim}(D(\A_G,\m))$ determined entirely by $H^1(\cR/\cJ[G])$?
\end{ques}

\noindent In terms of the Cartan-Eilenberg spectral sequence, Question~\ref{ques:2} could be phrased by asking whether all nonvanishing modules $Ext^i_S(D(\A_G,\m))$ are accounted for by nontrivial differentials on the $E_2$ page.

\begin{exm}\label{exm:TriangularPrism}
	\begin{figure}[htp]
		\centering
		
		\begin{tikzpicture}[scale=1]
		\tikzstyle{dot}=[circle,fill=black,inner sep=1 pt];
		
		\node[dot] (1) at (0,0) {};
		\node[dot] (2) at (0,-1){};
		\node[dot] (3) at (1,-2){};
		\node[dot] (4) at (-1,-2){};
		\node[dot] (5) at (-1,1){};
		\node[dot] (6) at (1,1){};
		
		\draw (1)node[above]{$1$}--(5) node[above]{$5$}--(6) node[above]{$6$};
		\draw (2)node[below]{$2$}--(3) node[below] {$3$}--(4)node[below]{$4$};
		\draw (2)--(3)--(6)--(1);
		\draw (1)--(2)--(4)--(5);
		\end{tikzpicture}
		\caption{$G$}\label{fig:TriPrism}
	\end{figure}
	
	Consider the graph $G$ in Figure~\ref{fig:TriPrism}, which appears in~\cite[Example~2.6]{HalKung}.  This is one of the simplest graphs where the Kung-Schenck bound is not tight.
	
	The complex $\cR[G]$ has the form
	\[
	S^6\rightarrow S^9 \rightarrow S^2.
	\]
	$\cJ[G]$ has the form
	\[
	\bigoplus_{ij\in E(G)}J(ij) \rightarrow J(045)\oplus J(123).
	\]
	$\cR/\cJ[G]$ has the form
	\[
	S^6 \rightarrow \bigoplus_{ij\in E(G)} S/J(ij) \rightarrow S/J(123)\oplus R/J(045)
	\]
	
	The complex $\D(G)$ is homotopy equivalent to a wedge of two circles, hence
	\[
	H^i(\cR[G])=\left\lbrace
	\begin{array}{rl}
	S & i=0\\
	S^2 & i=1\\
	0 & i\ge 2
	\end{array}
	\right.
	\]
	Since the two cliques of $G$ on three vertices are disjoint, it is straightforward to see that $H^i(\cJ[G])=0$ for $i\ge 2$.  Hence we have
	\[
	H^1(\cR/\cJ[G])=\mbox{coker}(H^1(\cJ[G]) \rightarrow H^1(\cR[G])=S^2).
	\]
	Macaulay2~\cite{M2} shows that $H_1(\cR/\cJ[G])$ has projective dimension $4$.  By Corollary~\ref{cor:pdimUB}, $\mbox{pdim}(D(\A_G))=2$.  $H_1(\cR/\cJ[G])$ has three associated primes of the form $I(H)$ where $H$ is an induced cycle of length four.  These have codimension three.  In this case, we may associate the failure of exactness of the Kung-Schenck bound on $\mbox{pdim}(D(\A_G))$ to the fact that $H_1(\cR/\cJ[G])$ is not Cohen-Macaulay of codimension three.
\end{exm}

\section{Acknowledgements}

I thank Stefan Tohaneanu, Takuro Abe, and Max Wakefield for providing feedback on earlier drafts of the paper.  I especially thank Stefan Tohaneanu for the suggestion of extending the results of~\cite{HalSplit} to graphic arrangements and Takuro Abe for providing Remark~\ref{rem:Yosh}.  I am grateful to Jay Schweig for patiently explaining some of the basics of matroid theory.  I am always thankful for the guidance and advice of Hal Schenck.

\bibliography{GraphBib,SplinesBib}{}
\bibliographystyle{plain}

\end{document}